\newtheorem{thm}{Theorem}
\newtheorem{lem}{Lemma}
\theoremstyle{definition}
\newtheorem{defn}{Definition}
\theoremstyle{remark}
\newtheorem{rem}{Remark}
\newtheorem*{nota}{Notation}
\newcommand{\R}{\mathbb{R}}
\newcommand{\T}{\mathbb{T}}
\newcommand{\Z}{\mathbb{Z}}
\newcommand{\bu}{\mathbf{u}} 
\newcommand{\cP}{\mathcal{P}}
\newcommand{\tf}{\widetilde{f}}
\newcommand{\tu}{\widetilde{\mathbf{u}}}
\newcommand{\trho}{\widetilde{\rho}}
\newcommand{\p}{{\partial}}
\newcommand{\vep}{{\varepsilon}}
\newcommand{\norm}[1]{{\left\Vert#1\right\Vert}}
\newcommand{\inn}[1]{{\left\langle #1 \right\rangle}}
\newcommand{\set}[1]{{\left\lbrace #1 \right\rbrace}}
\newcommand{\intO}{\int_{\Omega}}
\DeclareMathOperator*{\divg}{div}
\begin{document}
\baselineskip=18pt

\title[Blow up criteria]{Blow up criteria for the compressible Navier--Stokes equations}

\author{Hi Jun Choe \& Minsuk Yang}

\address{Hi Jun Choe: Department of Mathematics, Yonsei University, 50 Yonseiro, Seodaemungu, Seoul, Republic of Korea}
\email{choe@yonsei.ac.kr}

\address{Minsuk Yang: School of Mathematics, Korea Institute for Advanced Study, 85 Hoegiro Dongdaemungu, Seoul, Republic of Korea}
\email{yangm@kias.re.kr}

\begin{abstract}
We study the strong solution to the 3-D compressible Navier--Stokes equations.
We propose a new blow up criterion for barotropic gases in terms of the integral norm of density $\rho$ and the divergence of the velocity $\bu$ without any restriction on the physical viscosity constants.
Our blow up criteria can be seen as a partial realization of the underlying principle that the higher integrability implies the boundedness and then eventual regularity. 
We also present similar blow up criterion for the heat conducting gases.

\noindent {\it Keywords:} 
Compressible Navier--Stokes equations, 
Strong solution,
Blow up criteria
\end{abstract}

\maketitle

%=========================================================
\section{Introduction}
\label{S1}
%=========================================================

The motion of the compressible barotropic gases is governed by the viscous compressible Navier--Stokes equations, which consist of the equation of the conservation of mass 
\begin{equation}
\label{E11}
\p_t \rho + \divg(\rho \bu) = 0
\end{equation}
and the equation of the conservation of momentum 
\begin{equation}
\label{E12}
\p_t (\rho \bu) + \divg(\rho \bu \otimes \bu) = L\bu - \nabla p 
\end{equation}
in $(0,T)\times\Omega$ where the state variables $\bu$ and $\rho$ denotes the velocity field and the density of the fluid.
The Lam\'e operator $L$ is given by 
\begin{equation}
\label{E13}
L\bu = \mu \Delta \bu + (\lambda+\mu) \nabla \divg \bu
\end{equation}
where the viscosity constants satisfy the physical conditions
\begin{equation}
\label{E14}
\mu >0 \quad \text{ and } \quad 3\lambda + 2 \mu >0.
\end{equation}
The pressure $p$ is given by the barotropic constitutive relation
\begin{equation}
\label{E15}
p = \alpha \rho^\gamma 
\end{equation}
for some constants $\gamma > 1$ and $\alpha > 0$.
In this paper, the space domain is assumed to be a torus $\Omega = \T^3 = \R^3/\Z^3$.
The periodic boundary conditions are assumed and the initial conditions 
\[\rho|_{t=0} = \rho_0, \qquad \bu|_{t=0} = \bu_0\]
are assumed to guarantee the existence of strong solutions.
The constant $\alpha$ in \eqref{E15} plays no role in our argument, so we simply assume $\alpha=1$.

We also consider the motion of the compressible heat conducting gases. 
The viscous compressible heat conducting Navier--Stokes equations satisfy
\begin{equation}
\label{E16}
\p_t (\rho \theta) + \divg(\rho\theta \bu)+\rho\theta\divg \bu
-\kappa \Delta \theta = \frac{\mu}{2} |\nabla \bu +\nabla \bu^{t}|^2 +
\lambda |\divg \bu|^2 
\end{equation}
with the conservation of mass \eqref{E12} and the conservation of momentum \eqref{E13} in $(0,T)\times\Omega$ where the constant $\kappa>0$ denotes the heat conductivity.
The pressure $p$ is given by the constitutive relation
\begin{equation}
\label{E17}
p = \alpha\rho\theta.
\end{equation}
for some constant $\alpha > 0$.
The space domain is also assumed to be a torus $\Omega = \T^3 = \R^3/\Z^3$.
The periodic boundary conditions are assumed and the initial conditions 
\[\rho|_{t=0} = \rho_0, \qquad \bu|_{t=0} = \bu_0, \qquad \theta|_{t=0} = \theta_0\]
are assumed to guarantee the existence of strong solutions.
Since the constant $\alpha$ and $\kappa$ plays no role in our argument, we simply assume $\alpha=\kappa=1$.

When the initial data are in suitable energy classes, global existence of weak solutions for the compressible barotropic gases was proved for $\gamma \ge 9/5$ by Lions \cite{L}.
Later, the result was extended to the case $\gamma > 3/2$ by Feireisl--Novotn\'y--Petzlotv\'a \cite{FNP}.
Although there are huge literature on existence results according to various conditions on the initial data (e.g., Solonnikov \cite{S}, Matsumura--Nishida \cite{MN}, Vaigant--Kazhikhov \cite{VK}, and Hoff \cite{H}), we only mention the works closely related to this paper.

In a series of papers \cite{CCK,CK1,CK2,CJ} Cho, Choe, Jin, and Kim proved short time existence of strong solutions allowing vacuum region.
When vacuum region initially exists, Xin \cite{X} proved that the smooth solution blows up in finite time.
Huang--Li--Xin \cite{HLX} proved Beal-Kato-Majda type blow up criterion in terms of velocity.
Li--Li--Xin \cite{LLX} proved the strong solution blows up as vacuum states vanish.
Recently, Sun--Wang--Zhang \cite{SWZ} presented a nice characterization of the maximal existence time of strong solution for barotropic gases in terms of the essential superemum norm of density under the condition that the viscosity constants $\lambda$ and $\mu$ satisfy $\lambda < 7\mu$.
Wen--Zhu \cite{WZ} improved and extended the result in Sun--Wang--Zhang \cite{SWZ} by making some weaker assumptions on such viscosity constants.
We refer the reader to the paper \cite{WZ} for summary and references of other important blow up criteria and extensions.

Our aim of this paper is to present new blow up criteria, Theorem \ref{G1} and Theorem \ref{G2}, without any restriction on the physical constants $\lambda$ and $\mu$. 
We use the integral norm of density $\rho$ and the divergence of the velocity $\bu$ which is rather significant if we consider the continuity equation and the weak compactness for approximations.
It seems that the condition on the divergence of the velocity $\bu$ commensurate with such viscosity restrictions.
We also present similar blow up criteria for the heat conducting gases.
Our blow up criteria can be seen as a partial realization of the underlying principle that the higher integrability implies the boundedness and then eventual regularity.

%=========================================================
\section{Preliminaries and main results}
\label{S2}
%=========================================================

We shall use the standard notation $L^q(\Omega)$ and $W^{k,q}(\Omega)$ for the Lebesgue spaces and the Sobolev spaces.
We denote $H^{k}(\Omega) = W^{k,2}(\Omega)$ and 
\[D^{k,q}(\Omega) = \set{\bu \in L^1_{loc}(\Omega) : \norm{\nabla^k \bu}_{L^q(\Omega)} <\infty}.\]
We define the space $D_0^{k,q}(\Omega)$ to be the closure of $C_c^\infty(\Omega)$ in $D^{k,q}(\Omega)$.

The following results on the local wellposedness in time and blow up criteria of the strong solution for the initial data with vacuum are established in a series of papers \cite{CCK}, \cite{CK1}, and \cite{CK2} by Cho, Choe, and Kim.

We recall the following theorem for the existence of the strong solution corresponding to the barotropic gases.

\begin{thm}
\label{T1}
Let $\Omega$ be a bounded smooth domain or $\R^3$.
Suppose $\bu_0 \in D_0^1(\Omega)\cap D^2(\Omega)$ and 
\[\rho_0 \in W^{1,q}(\Omega) \cap H^1(\Omega)\cap L^1(\Omega)\]
for some $q\in (3,6]$.
If $\rho_0$ is nonnegative and the initial data satisfy the compatibility condition
\[L \bu_0 +\nabla p(\rho_0) = \sqrt{\rho_0} g\]
for some vector field $g\in L^2(\Omega)$, then there exist a time $T \in (0,\infty]$ and unique solution
\[(\rho,\bu)\in C([0,T] : H^1\cap W^{1,q}(\Omega)) \times C([0,T] : D^2(\omega)) \cap L^2(0,T;D^{2,q}(\Omega)).\]
Moreover, if the maximal existence time $T^\star$ of the solution is finite, then 
\[\lim\sup_{t\rightarrow T^\star} ||\rho||_{W^{1,q}}(t)+||\bu||_{D^1(\Omega)}=\infty.\]
\end{thm}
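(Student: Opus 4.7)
The plan is to construct the solution by a non-degenerate regularization followed by a vacuum limit, and to read off the blow-up criterion from the same a priori estimates via a standard continuation argument.

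First I would regularize the initial density by $\rho_0^\delta = \rho_0 + \delta$ with $\delta>0$ so it is strictly positive, and set up a Picard-type iteration. Given $\bu^n$, solve the linear transport equation $\p_t \rho^{n+1} + \divg(\rho^{n+1}\bu^n) = 0$ by the method of characteristics, which preserves positivity and transfers $W^{1,q}\cap H^1$ regularity. Then solve the linear, non-degenerate parabolic system
\[\rho^{n+1}\p_t \bu^{n+1} + \rho^{n+1}(\bu^n\cdot\nabla)\bu^{n+1} = L\bu^{n+1} - \nabla p(\rho^{n+1})\]
for $\bu^{n+1}$ by a Galerkin method. Because $\rho^{n+1}\ge\delta$, standard parabolic theory combined with elliptic regularity for the Lam\'e operator (coercive under \eqref{E14}) yields $\bu^{n+1}\in L^\infty(0,T;H^1\cap D^2)\cap L^2(0,T;D^{2,q})$.

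The key step is the derivation of estimates independent of both $\delta$ and $n$. A basic energy identity gives $L^\infty L^2$ control of $\sqrt{\rho}\,\bu$ and $L^2 H^1$ control of $\nabla\bu$. Testing the momentum equation against $\p_t \bu$ yields a differential inequality for $\norm{\nabla\bu}_{L^2}^2$ whose right-hand side involves $\norm{\sqrt{\rho}\,\p_t\bu}_{L^2}^2$; the crucial point is that the initial value $\sqrt{\rho_0}\,\p_t \bu|_{t=0}$, which is formally $g - \sqrt{\rho_0}(\bu_0\cdot\nabla)\bu_0$, belongs to $L^2$ precisely because of the compatibility condition. Differentiating the momentum equation in time and testing with $\p_t\bu$ then gives bounds on $\norm{\sqrt{\rho}\,\p_t\bu}_{L^\infty L^2}$ and $\norm{\nabla\p_t\bu}_{L^2 L^2}$, which by elliptic regularity upgrade $\bu$ to the classes $L^\infty D^2 \cap L^2 D^{2,q}$. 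In parallel, spatial differentiation of the continuity equation and a Gr\"onwall argument against these velocity bounds yield the required estimates on $\norm{\rho}_{W^{1,q}}+\norm{\rho}_{H^1}$.

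Once the iterates are shown to be Cauchy in a weak norm (say $L^\infty L^2\times L^\infty L^2$) on a short interval, weak-$*$ compactness at higher regularity allows passage to the limit $n\to\infty$ and then $\delta\to 0$, producing the strong solution; uniqueness follows from an $L^2$ energy estimate for the difference of two solutions. For the blow-up criterion, if $T^\star<\infty$ but $\limsup_{t\to T^\star}(\norm{\rho}_{W^{1,q}}+\norm{\bu}_{D^1})$ were finite, one could pick $t_0$ close to $T^\star$, treat $(\rho(t_0),\bu(t_0))$ as new initial data (the compatibility condition being inherited through the equation itself), and reapply the local existence part to extend the solution past $T^\star$, contradicting maximality. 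The main obstacle throughout is the possible vacuum: when $\rho$ vanishes, the momentum equation degenerates and $\p_t\bu$ cannot be controlled directly, so every time-derivative estimate must carry a $\sqrt{\rho}$ weight, and the compatibility condition is indispensable for obtaining the initial value of these weighted quantities in $L^2$; secondary difficulties, such as propagating the $W^{1,q}$ regularity of $\rho$ for $q>2$ via $L^q$ elliptic estimates for the Lam\'e system, are by now routine in the Cho--Choe--Kim framework.
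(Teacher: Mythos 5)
The paper does not prove Theorem~\ref{T1}; it is recalled as a known result from the Cho--Choe--Kim series \cite{CCK,CK1,CK2}, so there is no in-paper argument to compare against. That said, your outline --- regularize the density off vacuum, iterate a linearized transport/Lam\'e system, derive $\sqrt{\rho}$-weighted a priori bounds uniform in the regularization and iteration parameters by using the compatibility condition to put $\sqrt{\rho}\,\p_t\bu|_{t=0}$ in $L^2$, pass to the limit by compactness, and obtain the blow-up criterion by a continuation argument with the compatibility condition inherited at later times from $\sqrt{\rho}\,\tu\in L^\infty L^2$ --- is a faithful high-level reproduction of the strategy in those references.

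Two places are more delicate than your sketch acknowledges. First, the crux is not merely that the compatibility condition gives $\sqrt{\rho_0}\,\p_t\bu(0)\in L^2$; after differentiating the momentum equation in time, one must absorb terms such as $\p_t\rho\,\p_t\bu$ and $\p_t\rho\,(\bu\cdot\nabla)\bu$ using only density-weighted quantities, because $\p_t\bu$ has no unweighted $L^2$ control on the vacuum set. Closing that loop is where most of the technical work in \cite{CCK,CK1,CK2} resides, and your proposal names the step without indicating how it closes. Second, the regularization $\rho_0^\delta=\rho_0+\delta$ is harmless on a bounded domain but on $\Omega=\R^3$ it destroys the assumed $L^1$ integrability and decay at infinity; the cited papers handle the whole-space case with a modified approximation, and a complete proof would need to do the same.

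Neither point is a wrong turn --- your route is the standard one --- but both would have to be filled in for the argument to stand on its own rather than as a summary of the cited works.
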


We recall the following theorem for the existence of the strong solution corresponding to the heat conducting gases.

\begin{thm}
\label{T2}
Let $\Omega$ be a bounded smooth domain or $\R^3$.
Suppose $\bu_0, \theta_0 \in D_0^1(\Omega)\cap D^2(\Omega)$ and 
\[\rho_0 \in W^{1,q}(\Omega) \cap H^1(\Omega)\cap L^1(\Omega)\] 
for some $q\in (3,6]$. 
If $\rho_0$ is nonnegative and the initial data satisfy the compatibility condition
\begin{align*}
&L \bu_0 +\nabla p(\rho_0) = \sqrt{\rho_0} g_1\\
&\Delta\theta_0+\frac{\mu}{2}|\nabla \bu_0 +(\nabla \bu_0)^{t}|^2
+ \lambda(\divg \bu_0)^2=\sqrt{\rho_0} g_2
\end{align*}
for a vector field $g_1,g_2\in L^2(\Omega)$. 
Then there exist a time $T\in (0,\infty]$ and unique solution
\begin{align*}
\label{Hstrong}
(\rho,\bu,\theta) &\in C([0,T ):H^1\cap W^{1,q}) \times C([0,T):D^2\cap D^1_0)\times L^2([0,T):D^{2,q}) \\
(\rho_t,\bu_t,\theta_t) &\in C([0,T ):L^2\cap L^q)\times L^2([0,T): D^1_0)\times L^2([0,T):D^1_0) \\
(\rho^{1/2}\bu_t,\rho^{1/2}\theta_t) &\in L^\infty([0,T):L^2) \times L^\infty([0,T):L^2).
\end{align*}
\end{thm}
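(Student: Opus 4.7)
Since Theorem \ref{T2} is the heat--conducting analogue of Theorem \ref{T1}, the plan is to extend the Cho--Choe--Kim iteration scheme by coupling an additional linear parabolic step for $\theta$ to the momentum/continuity iteration. First I would regularize the density by $\rho_0^\delta := \rho_0 + \delta$ to remove vacuum, mollify $\bu_0,\theta_0,g_1,g_2$ appropriately, and arrange the mollifications so that the two compatibility conditions hold at the $\delta$--level with norms bounded uniformly in $\delta$.

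Given iterates $(\rho^k,\bu^k,\theta^k)$, I would define $\rho^{k+1}$ from the linear continuity equation $\p_t\rho^{k+1}+\divg(\rho^{k+1}\bu^k)=0$ via characteristics, which preserves the lower bound $\delta$ and transfers the $W^{1,q}\cap H^1$ regularity of $\rho_0^\delta$; then solve the linear parabolic system
\[
\rho^{k+1}\p_t\bu^{k+1}+\rho^{k+1}\bu^k\cdot\nabla\bu^{k+1}-L\bu^{k+1}=-\nabla(\rho^{k+1}\theta^k)
\]
for $\bu^{k+1}$; and finally solve
\[
\rho^{k+1}\p_t\theta^{k+1}+\rho^{k+1}\bu^{k+1}\cdot\nabla\theta^{k+1}+\rho^{k+1}\theta^{k+1}\divg\bu^{k+1}-\Delta\theta^{k+1} = \tfrac{\mu}{2}|\nabla\bu^{k+1}+(\nabla\bu^{k+1})^t|^2+\lambda(\divg\bu^{k+1})^2
\]
for $\theta^{k+1}$. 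Standard energy and Lam\'e elliptic estimates, combined with time--differentiated estimates tested against $\bu^{k+1}_t$ and $\theta^{k+1}_t$, close a set of $k$--uniform bounds on a short interval $[0,T_*]$, and contraction in a weaker norm yields a fixed point $(\rho_\delta,\bu_\delta,\theta_\delta)$ of the regularized problem.

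The hard step is the vacuum limit $\delta\to 0$, since all the preceding estimates a priori depend on $\norm{\rho_\delta^{-1}}_{L^\infty}$. The key observation is that time--differentiating the momentum equation and pairing with $\bu_t$ gives
\[
\frac{d}{dt}\intO\rho_\delta|\bu_t|^2 + c\norm{\nabla\bu_t}_{L^2}^2 \le \Phi(t)\Bigl(1+\intO\rho_\delta|\bu_t|^2\Bigr),
\]
where $\Phi$ depends only on lower--order norms already controlled. The first compatibility condition ensures that $\intO\rho_\delta|\bu_t|^2$ at $t=0$ converges to $\intO|g_1|^2$ as $\delta\to 0$, so Gronwall delivers a $\delta$--uniform bound on $\sqrt{\rho_\delta}\bu_t$ in $L^\infty_tL^2_x$. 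An analogous argument for $\theta_t$, using the second compatibility condition to define the initial value of $\sqrt{\rho_\delta}\theta_t$ and the Sobolev embedding $D^2\hookrightarrow W^{1,6}$ to control the quadratic source term, yields $\sqrt{\rho_\delta}\theta_t \in L^\infty_tL^2_x$ uniformly. These bounds close the higher regularity $\nabla^2\bu_\delta, \nabla^2\theta_\delta \in L^2_t L^q_x$ via Lam\'e elliptic theory, and then $\nabla\rho_\delta \in L^\infty_tL^q_x$ via the continuity equation.

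Finally, with $\delta$--uniform estimates in hand I would pass to the limit by Aubin--Lions compactness and weak lower semicontinuity, obtaining a strong solution in the claimed functional framework. Uniqueness reduces to a standard energy estimate on the difference of two solutions, in which the degeneracy of $\rho$ at the vacuum is compensated by the $L^\infty_tL^2_x$ control of $\sqrt{\rho}\bu_t$ and $\sqrt{\rho}\theta_t$ just obtained.
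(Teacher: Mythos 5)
The paper does not supply a proof of Theorem~\ref{T2}; in Section~\ref{S2} it is explicitly \emph{recalled} from the Cho--Choe--Kim series \cite{CCK,CK1,CK2}, alongside Theorem~\ref{T1}, and is used as a black box in the rest of the paper. So there is no in-paper proof against which to compare your attempt; what you have written is a reconstruction of the argument in those references.

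As a reconstruction it is faithful in outline to the Cho--Choe--Kim program: regularize $\rho_0$ by adding $\delta>0$ to remove vacuum, set up a linearized iteration where the continuity equation is solved by characteristics, the momentum equation is a linear parabolic/Lam\'e system, and the temperature equation is a linear parabolic equation sourced by the viscous dissipation of the previous velocity iterate; obtain $\delta$-independent bounds using the compatibility conditions; pass to the limit by compactness; prove uniqueness by an energy estimate exploiting the $L^\infty_tL^2_x$ control of $\sqrt{\rho}\bu_t$ and $\sqrt{\rho}\theta_t$. One small imprecision worth fixing: it is not true that $\intO\rho_\delta|\bu_t|^2$ at $t=0$ converges to $\intO|g_1|^2$. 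From the momentum equation at $t=0$ one has
\[
\sqrt{\rho_0^\delta}\,\bu_t\big|_{t=0}=-\,\frac{1}{\sqrt{\rho_0^\delta}}\bigl(L\bu_0-\nabla p(\rho_0)\bigr)-\sqrt{\rho_0^\delta}\,(\bu_0\cdot\nabla)\bu_0
=-\,\frac{\sqrt{\rho_0}}{\sqrt{\rho_0^\delta}}\,g_1-\sqrt{\rho_0^\delta}\,(\bu_0\cdot\nabla)\bu_0 ,
\]
so the compatibility condition guarantees only a $\delta$-uniform bound
$\norm{\sqrt{\rho_0^\delta}\bu_t(0)}_{L^2}\lesssim\norm{g_1}_{L^2}+\norm{\sqrt{\rho_0^\delta}}_{L^\infty}\norm{\bu_0}_{D^1\cap D^2}$,
not an exact limit value. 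The same correction applies to the initial value of $\sqrt{\rho_0^\delta}\theta_t$: the second compatibility condition bounds it in terms of $\norm{g_2}_{L^2}$ plus lower-order terms coming from $\bu_0\cdot\nabla\theta_0$ and $\theta_0\divg\bu_0$. With this adjustment the Gronwall step still closes, and the rest of the sketch stands.

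Finally, note that as stated the theorem is for $\Omega$ a bounded smooth domain or $\R^3$, and the paper remarks after Theorem~\ref{T2} that the result ``continues to hold'' for $\T^3$ with periodic boundary conditions; your sketch does not engage with the boundary conditions at all, but on the torus (which is what the blow-up criteria in Sections~\ref{S4}--\ref{S5} actually use) the Lam\'e elliptic estimates and Aubin--Lions compactness go through without modification.
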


Throughout the paper we consider the strong solutions in the above regularity function classes.
The result in Theorem \ref{T1} and \ref{T2} continue to hold for $\Omega = \T^3$ with periodic boundary conditions.
Our blow up criterion for the compressible barotropic gases is the following theorem.

\begin{thm}
\label{G1}
Suppose $(\rho,\bu)$ is the unique strong solution in Theorem \ref{T1}.
If the maximal existence time $T^\star$ is finite, then there exists a number $\beta \in (1,\infty)$ depending only on $\gamma$ such that 
\[\limsup_{t\rightarrow T^\star} \left(\norm{\rho}_{L^\beta}(t) + \norm{\divg \bu}_{L^3}(t)\right) =\infty.\]
\end{thm}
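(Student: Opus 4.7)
The plan is to argue by contradiction. Suppose $T^\star < \infty$ and
\[
M := \sup_{0 \le t < T^\star} \bigl( \norm{\rho(t)}_{L^\beta} + \norm{\divg \bu(t)}_{L^3} \bigr) < \infty
\]
for some large $\beta$ to be chosen in terms of $\gamma$. The goal is to upgrade these bounds to a uniform control of $\norm{\rho}_{W^{1,q}} + \norm{\nabla \bu}_{L^2}$ on $[0,T^\star)$, which contradicts the continuation criterion in Theorem \ref{T1}. As a first step, the standard energy identity from testing \eqref{E12} against $\bu$ gives $\norm{\sqrt{\rho}\,\bu(t)}_{L^2}$, $\norm{\rho(t)}_{L^\gamma}$, and $\int_0^{T^\star} \norm{\nabla \bu}_{L^2}^2 \, dt$ all bounded by the initial data.

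The central tool is the effective viscous flux $F = (2\mu+\lambda) \divg \bu - p(\rho)$. Taking the divergence of \eqref{E12} yields $\Delta F = \divg(\rho \dot{\bu})$ with $\dot{\bu} = \p_t \bu + (\bu \cdot \nabla)\bu$, so elliptic estimates control $\norm{F}_{L^r}$ in terms of $\norm{\rho \dot{\bu}}_{L^s}$. Rewriting $\divg \bu = (F + \rho^\gamma)/(2\mu+\lambda)$ and reading the continuity equation along the trajectories $X(t)$ of $\bu$,
\[
\frac{d}{dt}\rho(X(t),t) + \frac{\rho^{\gamma+1}(X(t),t)}{2\mu+\lambda} = -\frac{\rho(X(t),t)\, F(X(t),t)}{2\mu+\lambda}.
\]
The nonlinear damping $\rho^{\gamma+1}$ fights the right-hand side, and a Zlotnik-type ODE lemma then produces a uniform bound on $\norm{\rho}_{L^\infty([0,T^\star)\times \Omega)}$ once $F$ is controlled along trajectories. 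The size of $\beta$ enters precisely here: $\beta$ must be large enough that the elliptic estimate on $F$, combined with the available $L^\beta$ integrability of $\rho$ and the $L^3$ integrability of $\divg \bu$, actually yields the required bound.

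With $\rho \in L^\infty$ and $\divg \bu \in L^\infty_t L^3_x$ in hand, I would next test \eqref{E12} against $\dot{\bu}$. The pressure and Lam\'e terms become manageable because $\rho$ is bounded, and the convective terms are controlled through the $L^3$ norm of $\divg \bu$ together with Sobolev embeddings; Gr\"onwall then yields $\norm{\sqrt{\rho}\,\dot{\bu}(t)}_{L^2}$ and $\int_0^{T^\star} \norm{\nabla \dot{\bu}}_{L^2}^2\, dt$ bounded. A Lam\'e elliptic estimate converts this into control of $\norm{\nabla^2 \bu}_{L^2}$ and, via Sobolev, of $\norm{\nabla \bu}_{L^p}$ for some $p > 3$. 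Finally, differentiating \eqref{E11}, testing with $|\nabla \rho|^{q-2}\nabla \rho$, and using a logarithmic (Brezis--Wainger / Beale--Kato--Majda type) interpolation to control $\norm{\nabla \bu}_{L^\infty}$ by $\log$ of higher norms, a further Gr\"onwall step closes the estimate for $\norm{\nabla \rho}_{L^q}$ and completes the contradiction.

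The main obstacle will be the $L^\infty$ bound on $\rho$: to apply Zlotnik's lemma along characteristics, one needs $F$ to be controlled along trajectories using only $L^\beta$ information on $\rho$ and $L^3$ information on $\divg \bu$. This step dictates both the choice of $\beta$ as a function of $\gamma$ and the technical heart of the argument; once the $L^\infty$ bound on $\rho$ is secured, the remaining higher-regularity estimates follow a relatively standard scheme.
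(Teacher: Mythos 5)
Your high-level strategy --- argue by contradiction, use the effective viscous flux $G = (\lambda+2\mu)\divg\bu - p$ with $\Delta G = \divg(\rho\tu)$, read the density along Lagrangian trajectories, and establish $\rho \in L^\infty$ before turning to higher regularity --- is exactly the strategy of the paper (Lemma \ref{T31} and its supporting chain). But you have explicitly left open the one step that is the technical heart: controlling the accumulated flux $\int_0^t G(s, X(s,0,x))\,ds$ along trajectories using only $\norm{\rho}_{L^\beta}$ and $\norm{\divg\bu}_{L^3}$. Invoking a Zlotnik-type ODE lemma does not by itself produce that bound; in Huang--Li--Xin and Sun--Wang--Zhang this is fed by an estimate on $\sup_t \norm{\sqrt{\rho}\,\tu}_2$, which itself tends to require $\rho \in L^\infty$ --- the circularity that forces the viscosity restriction $\lambda < 7\mu$ that this paper is designed to remove.

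The paper breaks the circularity in Lemma \ref{T31} by a decomposition you do not mention: write $\rho\tu = \widetilde{\rho\bu} + \rho(\divg\bu)\bu$ and observe that $\int_0^t \widetilde{\rho\bu}(s,X(s,0,x))\,ds$ telescopes to $(\rho\bu)(t,X(t,0,x)) - (\rho\bu)(0,x)$. This turns the trajectory integral of $G = \Delta^{-1}\divg(\rho\tu)$ into
\[
\Delta^{-1}\divg(\rho_0\bu_0)(x) - \Delta^{-1}\divg(\rho\bu)(t,X(t,0,x)) - \int_0^t \Delta^{-1}\divg\bigl(\rho(\divg\bu)\bu\bigr)(s,X(s,0,x))\,ds,
\]
each term being controlled in $L^\infty_x$ by $\norm{\rho\bu}_B$ or $\norm{\nabla\bu}_A\norm{\rho\bu}_B$ for exponents with $1/A+1/B < 1/3$. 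Those in turn require the intermediate chain of estimates (Lemmas \ref{T41}--\ref{T45}): $\sup_t \norm{\rho^{1/C}\bu}_C$ by testing the momentum equation against $|\bu|^{C-2}\bu$, then $\sup_t \norm{\sqrt{\rho}\,\tu}_2$, then $\sup_t \norm{\nabla\bu}_A$ via the elliptic estimates for $G$ and $\cP\bu$. Without this decomposition and chain, your outline does not close. Two smaller remarks: since $p \ge 0$ the paper simply drops the nonlinear damping term and works with an inequality, so Zlotnik's lemma is not actually needed; and your endgame (test against $\dot{\bu}$, log-interpolation for $\norm{\nabla\rho}_{L^q}$) is sound and corresponds to what the paper delegates to Section 5 of Sun--Wang--Zhang.
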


\begin{rem}
For example, if $\gamma=3/2$, then the integral norm of density with the integrable exponent $\beta \ge 16$ blows up.
Actually, we propose a range of the exponent $\beta$ in the proof.
Obviously, it is an important question to know the minimal integrability exponent $\beta$.
\end{rem}

Our blow up criterion for the compressible heat conducting gases is the following theorem.

\begin{thm}
\label{G2}
Suppose $(\rho,\bu,\theta)$ is the unique strong solution in Theorem \ref{T2}.
If the maximal existence time $T^*$ is finite, then there exists a number $\delta \in (1,\infty)$ depending only on fluid dynamic parameters such that 
\[\limsup_{t\rightarrow T^*}
\left(\norm{\rho}_{L^\delta}(t) + \norm{\Delta \theta}_{L^{2}}(t) + \norm{\divg \bu}_{L^3}(t)\right) = \infty.\]
\end{thm}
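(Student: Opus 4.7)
The plan is to argue by contradiction: assume that
\[ N := \sup_{0 \le t < T^*} \left( \norm{\rho}_{L^\delta}(t) + \norm{\Delta \theta}_{L^{2}}(t) + \norm{\divg \bu}_{L^3}(t) \right) < \infty \]
for a sufficiently large $\delta$, and then show that the quantities that drive the local existence theory of Theorem \ref{T2}---most importantly $\norm{\rho(t)}_{W^{1,q}}$, $\norm{\nabla \bu(t)}_{L^2}$, and their $\theta$ analogues---stay bounded on $[0,T^*)$. Once these are under control, the local construction of Cho--Choe--Kim can be restarted at a time close to $T^*$, contradicting the maximality of $T^*$.

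The key preliminary observation is that the assumed $L^\infty_t$ control of $\norm{\Delta \theta}_{L^2}$, together with the mean control coming from the basic energy estimate, places $\theta$ in $L^\infty(0,T^*;H^2(\Omega))$, and the three-dimensional Sobolev embedding $H^2(\T^3) \hookrightarrow C^0(\T^3)$ then yields $\theta \in L^\infty_{t,x}$. Consequently the pressure $p=\rho\theta$ is pointwise comparable to $\rho$, and the treatment of the momentum equation essentially reduces to the barotropic case of Theorem \ref{G1}: I would introduce the effective viscous flux $F = (2\mu+\lambda)\divg \bu - p$, use the identity $\Delta F = \divg(\rho \dot{\bu})$ that follows from the momentum equation, and apply Zlotnik's inequality along the characteristics of $\dot{\rho} + \rho \divg \bu = 0$. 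Combined with the $L^\delta$ control of $\rho$ and the $L^3$ control of $\divg \bu$, this yields $\norm{\rho}_{L^\infty_{t,x}} \le C$ provided $\delta$ is at least as large as the exponent produced in the proof of Theorem \ref{G1}.

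With $\rho$ and $\theta$ in $L^\infty$, the basic energy estimates furnish $\nabla \bu, \nabla \theta \in L^2_{t,x}$. Testing the momentum equation by $\dot{\bu} = \p_t \bu + \bu \cdot \nabla \bu$ and the energy equation by $\dot{\theta}$, then recovering $\nabla^2 \bu$ and $\nabla^2 \theta$ in appropriate $L^q_x$ spaces by Lam\'e elliptic theory and Calder\'on--Zygmund estimates, a Beale--Kato--Majda-type control of $\norm{\nabla \bu}_{L^\infty}$ combined with the transport structure of the continuity equation propagates the $W^{1,q}$ bound on $\rho$. The main obstacle will be closing the Gr\"onwall inequality for the coupled $(\bu,\theta)$ system: the quadratic source $\tfrac{\mu}{2}|\nabla \bu + \nabla \bu^t|^2 + \lambda(\divg \bu)^2$ in the energy equation sits at the borderline of integrability, and it is precisely the assumed $L^\infty_t L^2_x$ control of $\Delta \theta$---which upgrades $\nabla \theta$ to $L^\infty_t L^6_x$ by Sobolev---that provides just enough room to absorb the cross terms between the momentum and energy equations. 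The explicit value of $\delta$ is then dictated by the interplay of Zlotnik's inequality, the Calder\'on--Zygmund exponents on the effective viscous flux, and the strength of the heat source, which explains why $\delta$ depends on the full set of fluid dynamic parameters rather than only on the pressure exponent.
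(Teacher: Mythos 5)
Your overall strategy mirrors the paper's: assume the three quantities stay bounded, use the $L^\infty_t L^2_x$ control of $\Delta\theta$ (together with $H^2(\T^3)\hookrightarrow L^\infty$) to put $\theta$ in $L^\infty_{t,x}$ so that $p=\rho\theta$ is controlled by $\rho$, then run a barotropic-style argument with the effective viscous flux $G=(\lambda+2\mu)\divg\bu - p$ and $\Delta G = \divg(\rho\tu)$ along the characteristics of $\p_t\rho+\bu\cdot\nabla\rho=-\rho\divg\bu$ to show $\rho\in L^\infty_{t,x}$, and finally invoke the continuation theory (the Sun--Wang--Zhang Section 5 argument) once $\rho,\theta\in L^\infty$. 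Your Zlotnik-along-characteristics formulation is essentially equivalent to the paper's Lagrangian-flow computation in Lemma \ref{T31}, and your paragraph about Lam\'e elliptic theory, Calder\'on--Zygmund, and propagating $W^{1,q}$ is describing what lives inside the cited \cite{SWZ} argument, which the paper treats as a black box.

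However, there is a genuine logical inversion in your narrative that would block the proof if carried out literally. You present the $\|\rho\|_\infty$ bound as following directly from $\rho\in L^\infty_t L^\delta_x$, $\divg\bu\in L^\infty_t L^3_x$, and the characteristic-line argument; only \emph{afterwards} do you mention testing the momentum equation by $\tu$ to obtain material-derivative estimates. But the characteristic-line step (Lemma \ref{T31} in the paper) does \emph{not} close under those two bounds alone: it requires $\sup_t\|\nabla\bu\|_A<\infty$ for some $A>3$ and $\sup_t\|\rho^{1/C}\bu\|_C<\infty$ for a suitably large $C$. Obtaining $\|\nabla\bu\|_A$ in turn requires $\sup_t\|\rho^{1/2}\tu\|_2<\infty$ (Lemma \ref{T55}), which is precisely the material-derivative energy estimate (Lemmas \ref{T53}--\ref{T54}) that you defer until after $\rho\in L^\infty$ is established. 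In other words, the chain is $\|\rho^{1/C}\bu\|_C \Rightarrow \|\rho^{1/2}\tu\|_2 \Rightarrow \|\nabla\bu\|_A \Rightarrow \|\rho\|_\infty$, and you have reversed it. A second, smaller point: testing the energy equation by $\dot\theta$ is unnecessary here; since $\|\Delta\theta\|_2$ is already assumed bounded, the only role the energy equation plays in the paper's estimates is to rewrite $\p_t p+\divg(p\bu)$ as $-p\divg\bu + \Delta\theta + \tfrac{\mu}{2}|\nabla\bu+\nabla\bu^t|^2+\lambda|\divg\bu|^2$, which the assumed control of $\Delta\theta$ and $\divg\bu$ then keeps in check. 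You should reorder the argument to establish the $\rho^{1/C}\bu$, $\rho^{1/2}\tu$, and $\nabla\bu\in L^A$ bounds \emph{before} attempting the $L^\infty$ bound on $\rho$.
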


We end this section by giving a few notations and definitions.

\begin{defn}
We denote by $\tf$ the derivative 
\[\tf = (\p_t + u\cdot\nabla)f.\]
In many paper the notation $\dot{f}$ was used, but we use $\tf$ for visibly easy discrimination.
\end{defn}

\begin{nota}
\begin{itemize}
\item
We denote the $L^q(\Omega)$ norm simply by 
\[\norm{f}_q = \norm{f}_{L^q(\Omega)}.\]
\item
We denote $X \lesssim Y$ if there is a generic positive constant $C$ such that $|X| \le C|Y|$.
\end{itemize}
\end{nota}

%=========================================================
\section{Reduction of the problem}
\label{S3}
%=========================================================

If we have proved $\sup_{0 \le t \le T} \norm{\rho}_\infty < \infty$ without assuming $7\mu >\lambda$, then the argument in Section 5 of \cite{SWZ} immediately yields 
\[\rho \in L^\infty(0,T;W^{1,q}), \qquad q \in (3,6].\]
This implies that $T$ should not be the maximal existence time of the strong solution.
So, the goal of this section is to realize that the integrability of $\rho$ and $\nabla \bu$ yields the boundedness of $\rho$.

It is well known that the effective viscous flux plays an important role in the existence theory of weak solution. 
The following simple observation is very useful to our argument.
Let $\cP $ be the Helmholz--Leray projection to solenoidal spaces.
Then taking the operator $\cP$ to the momentum equation $\rho \tu = L\bu - \nabla p$ yields 
\begin{equation}
\label{E21}
\cP (\rho \tu ) = \mu \Delta \cP \bu.
\end{equation}
Let us define 
\[G := (\lambda+2\mu) \divg \bu - p.\]
Then 
\[\Delta G = \divg(\rho \tu) = \divg (L\bu-\nabla p) = \Delta \left((\lambda+2\mu) \divg \bu - p\right).\]
Then observing that $\lambda+2\mu>0$ and 
\[\divg L\bu = (\lambda+2\mu) \Delta \divg \bu,\]
we may take the divergence operator to the momentum equation to obtain 
\begin{equation}
\label{E22}
\divg(\rho \tu) = \Delta G
\end{equation}
in the sense of distributions.

The following lemma shows a sufficient condition.

\begin{lem}
\label{T31}
Suppose 
\begin{equation}
\label{E31}
\sup_{0 \le t \le T} \left(\norm{\nabla \bu}_A + \norm{\rho}_\beta + \norm{\rho^{1/C}\bu}_C\right) < \infty
\end{equation}
for some positive numbers $A,B,C,\beta$ satisfying $A < B < C$, $1/A + 1/B < 1/3$, and 
\begin{equation}
(C-1)B/(C-B) \le \beta.
\end{equation}
Then 
\[\sup_{0 \le t \le T} \norm{\rho}_\infty < \infty.\] 
\end{lem}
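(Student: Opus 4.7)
The plan is to combine a Lagrangian estimate on the continuity equation with an elliptic bound on the effective viscous flux
\[G := (\lambda+2\mu)\divg\bu - \rho^\gamma,\]
which by \eqref{E22} satisfies $\Delta G = \divg(\rho\tu)$. The continuity equation gives $\trho = -\rho\divg\bu$, and since $\divg\bu = (G+\rho^\gamma)/(\lambda+2\mu)$ and $\rho^\gamma\ge 0$, one obtains along any Lagrangian trajectory $X(\cdot,x)$ of $\bu$ the one-sided bound
\[
\log \rho(X(t,x),t) \le \log \rho_0(x) + \frac{1}{\lambda+2\mu}\int_0^t \norm{G(\cdot,s)}_\infty \, ds.
\]
Thus the lemma reduces to a uniform-in-time bound on $\norm{G}_\infty$ on $[0,T]$. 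Standard Calder\'on--Zygmund theory applied to $\Delta G = \divg(\rho\tu)$ on $\T^3$ gives $\norm{\nabla G}_r \lesssim \norm{\rho\tu}_r$ for every $r\in(1,\infty)$ (the mean of $G$ equals $-|\Omega|^{-1}\int\rho^\gamma$, which is controlled by $\norm{\rho}_\beta$ whenever $\gamma\le\beta$), and the embedding $W^{1,r}(\T^3)\hookrightarrow L^\infty(\T^3)$ for $r>3$ reduces everything to exhibiting some $r>3$ with $\norm{\rho\tu}_r$ uniformly bounded on $[0,T]$.

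The convective half of $\rho\tu=\rho\p_t\bu+\rho\bu\cdot\nabla\bu$ is controlled directly by the three hypotheses, and this is where the role of the auxiliary exponent $B$ is fixed. H\"older with exponents $BC/(C-B)$ and $C$ yields
\[
\norm{\rho\bu}_B \le \norm{\rho}_{(C-1)B/(C-B)}^{(C-1)/C}\norm{\rho^{1/C}\bu}_C,
\]
and the constraint $(C-1)B/(C-B)\le\beta$ combined with the finiteness of $|\T^3|$ gives a uniform $L^\infty_t L^B_x$-bound on $\rho\bu$; a second H\"older with $1/r=1/A+1/B<1/3$ then gives $\norm{\rho\bu\cdot\nabla\bu}_r\le\norm{\rho\bu}_B\norm{\nabla\bu}_A$ uniformly in $t$ for some $r>3$. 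In this way $B$ serves simultaneously as the Lebesgue index for $\rho\bu$---forcing the coupling of $C$, $\beta$, and $B$---and as the H\"older partner of $A$, producing the Sobolev-type condition $1/A+1/B<1/3$.

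The main obstacle is the remaining term $\rho\p_t\bu$, which is not controlled by the three hypotheses and must be recovered from the equations. The natural route is to combine the identity $\rho\tu = \nabla G - \mu\nabla\times(\nabla\times\bu)$, obtained by rewriting $L\bu-\nabla p$ via $\Delta\bu = \nabla\divg\bu - \nabla\times\nabla\times\bu$, with the basic energy identity got by testing the momentum equation against $\tu$, which yields a uniform $L^\infty_t L^2$-bound on $\sqrt{\rho}\,\tu$; a higher-order identity (obtained by differentiating the momentum equation in time and substituting from the continuity equation) should then upgrade this bound to the desired $L^r$-bound with $r>3$ after interpolation with the convective estimate of the previous paragraph. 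Closing this feedback loop within the structural constraints $A<B<C$, $1/A+1/B<1/3$, and $(C-1)B/(C-B)\le\beta$, and pinning down the admissible range of $r$ in terms of the auxiliary exponent $B$, is the technical heart of the argument and will be the main bookkeeping obstacle.
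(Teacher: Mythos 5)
There is a genuine gap. You reduce the lemma to a \emph{pointwise-in-time} bound on $\norm{G}_\infty$, which via $\Delta G = \divg(\rho\tu)$ requires an $L^r$ bound on $\rho\tu$ for some $r>3$. You correctly observe that the hypotheses control the convective part $\rho(\bu\cdot\nabla)\bu$, but that $\rho\p_t\bu$ is not controlled, and you propose to recover it through energy estimates and ``a higher-order identity'', acknowledging that ``closing this feedback loop\,\ldots\ will be the main bookkeeping obstacle.'' That obstacle is not closed in your proposal, and it is in fact the part that cannot be closed from the hypotheses of Lemma~\ref{T31} alone: the hypotheses give $\nabla\bu\in L^\infty_t L^A_x$, $\rho\in L^\infty_t L^\beta_x$, and $\rho^{1/C}\bu\in L^\infty_t L^C_x$, but nothing like $\sqrt{\rho}\,\tu\in L^\infty_t L^2_x$, let alone $\rho\tu\in L^\infty_t L^r_x$ with $r>3$. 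Those bounds are derived only later (Lemmas~\ref{T43}--\ref{T44}), from \emph{different} inputs, and feed into the proof of Theorem~\ref{G1} at a stage where Lemma~\ref{T31} is already in hand; re-deriving them inside Lemma~\ref{T31} would require at minimum $\nabla\tu\in L^2$ to pass from $L^\infty_t L^2$ to $L^\infty_t L^r$ via Sobolev, and there is no route to that from \eqref{E31}.

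The paper sidesteps the problem entirely by \emph{not} bounding $\norm{G}_\infty$ at each time. Instead it bounds the Lagrangian time integral $K(x) = -\int_0^t G(s,X(s,0,x))\,ds$ directly, using the decomposition
\[
\rho\tu \;=\; \widetilde{\rho\bu} + \rho\,(\divg\bu)\,\bu,
\]
so that $\Delta^{-1}\divg$ applied to the $\widetilde{\rho\bu}$ piece, once integrated along the flow, telescopes:
\[
\int_0^t \widetilde{\rho\bu}(s,X(s,0,x))\,ds \;=\; (\rho\bu)(t,X(t,0,x)) - (\rho_0\bu_0)(x).
\]
This converts the would-be $\rho\p_t\bu$ term into endpoint evaluations of $\rho\bu$ itself, which \emph{is} controlled by the hypotheses via your own H\"older step $\norm{\rho\bu}_B \le \norm{\rho}_{(C-1)B/(C-B)}^{(C-1)/C}\norm{\rho^{1/C}\bu}_C$; and the remaining time integral involves $\rho(\divg\bu)\bu$, which your $1/A+1/B<1/3$ H\"older argument handles. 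No a priori control of the material or time derivative of $\bu$ is needed. So while your first and second paragraphs match the paper (the one-sided Lagrangian log estimate, the role of $B$ as both the index of $\rho\bu$ and the H\"older partner of $A$), the reduction in your third paragraph goes in the wrong direction, and the proposed fix is both unclosed and structurally unavailable at this point in the argument.
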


\begin{proof}
We define the Lagrangean flow $X$ of $\bu$ by $X(s,s,x) = x$ and 
\[\frac{\partial}{\partial t} X(t,s,x) = \bu(t, X(t,s,x)).\]
From the continuity equation we have 
\[\trho = (\p_t +\bu\cdot\nabla) \rho = -\rho \divg \bu.\]
Hence we can write 
\begin{align*}
\rho(t,X(t,0,x)) 
&= \rho_0(x) \exp\left(\int_0^t \frac{d}{ds} \ln \rho(s,X(s,0,x)) ds\right) \\
&= \rho_0(x) \exp\left(- \int_0^t (\divg \bu)(s,X(s,0,x)) ds\right). 
\end{align*}
Since $(\lambda+2\mu) \divg \bu = G + p$ and the pressure $p$ is non-negative, we can deduce 
\[\rho(t,X(t,0,x)) 
\le \rho_0(x) \exp\left(- (\lambda+2\mu)^{-1} \int_0^t G(s,X(s,0,x)) ds\right).\] 
To conclude the lemma, it suffices to prove that the following integral is bounded
\[K(x) := - \int_0^t G(s,X(s,0,x)) ds.\]

A direct computation shows 
\[\rho\tu = \widetilde{\rho \bu} - \trho \bu = \widetilde{\rho \bu} + \rho \divg \bu \bu.\]
From the relation \eqref{E22} we have
\begin{align*}
K(x)
&= - \Delta^{-1} \divg \int_0^t \widetilde{\rho \bu}(s,X(s,0,x)) ds \\
&\quad - \int_0^t \Delta^{-1} \divg (\rho \divg \bu \bu)(s,X(s,0,x)) ds
\end{align*}
where we notice that $\Delta^{-1} \divg (\widetilde{\rho \bu})$ and $\Delta^{-1} \divg (\rho \divg \bu \bu)$ are well-defined.
Integrating $\widetilde{\rho \bu}$ on the flow can be calculated as 
\begin{align*}
\int_0^t \widetilde{\rho \bu}(s,X(s,0,x)) ds 
&= \int_0^t \frac{d}{ds} (\rho \bu)(s,X(s,0,x)) ds \\
&=  (\rho \bu)(t,X(t,0,x)) - (\rho \bu)(0,x).
\end{align*}
Thus, we have 
\begin{align*}
K(x) 
&\le \Delta^{-1} \divg(\rho_0 \bu_0)(x) - \Delta^{-1} \divg(\rho \bu)(t,X(t,0,x)) \\
&\quad - \int_0^t \Delta^{-1} \divg (\rho \divg \bu \bu)(s,X(s,0,x)) ds.
\end{align*}
Since $3 < AB/(A+B)$, we have by the Sobolev-type estimate
\[\norm{\Delta^{-1} \divg (\rho \divg \bu \bu)}_\infty 
\lesssim \norm{\rho \divg \bu \bu}_{AB/(A-B)} 
\le \norm{\nabla \bu}_A \norm{\rho \bu}_B.\]
Thus, using the condition \eqref{E31}, we have 
\begin{align*}
\norm{K}_\infty 
&\lesssim \sup_{0 \le t \le T} \norm{\Delta^{-1} \divg(\rho \bu)}_\infty + \int_0^T \norm{\Delta^{-1} \divg\left(\rho \divg \bu \bu\right)}_\infty \\
&\lesssim \left(1 + T \sup_{0 \le t \le T} \norm{\nabla \bu}_A\right) \sup_{0 \le t \le T} \norm{\rho \bu}_B \\
&\lesssim \sup_{0 \le t \le T} \norm{\rho \bu}_B
\end{align*}
and for all $0 \le t \le T$
\begin{align*}
\norm{\rho \bu}_B
&\le \norm{\rho^{(C-1)/C}}_{CB/(C-B)} \norm{\rho^{1/C}\bu}_C \\
&= \norm{\rho}_{(C-1)B/(C-B)}^{(C-1)/C} \norm{\rho^{1/C}\bu}_C < \infty.
\end{align*}
This proves the lemma.
\end{proof}

%=========================================================
\section{Proof of Theorem \ref{G1}}
\label{S4}
%=========================================================

We assume that $(\rho, \bu)$ is the strong solution with the regularity stated in Theorem \ref{T1} and satisfies the following integrability condition for some positive number $\beta$ 
\begin{equation}
\label{E40}
\sup_{0 \le t \le T} \left(\norm{\rho}_{\beta} + \norm{\divg \bu}_3\right) < \infty.
\end{equation}
The number $\beta$ will be specified in each lemmas.

If $(\rho,\bu)$ is a strong solution in $[0,T)$ with the regularity stated in Theorem \ref{T1}, then it is easy to see from the continuity equation that for all $t \in [0,T)$
\[\intO \rho(t) = \intO \rho_0.\]
On the other hand, multiplying the moment equation by $\bu$, we see that for all $t \in [0,T)$
\begin{align*}
&\frac{1}{2} \intO \rho |\bu|^2(t) + \frac{1}{\gamma-1} \intO \rho^\gamma(t) + \mu \int_0^t \intO |\nabla \bu|^2 + (\mu+\lambda) \int_0^t \intO |\divg \bu|^2 \\
&= \frac{1}{2} \intO \rho_0 |\bu_0|^2 + \frac{1}{\gamma-1} \intO \rho_0^\gamma.
\end{align*}
We shall show that the condition \eqref{E40} with a suitable $\beta$ yields $\sup_{0 \le t \le T} \norm{\rho^{1/C}\bu}_C < \infty$.
In order to prove it, we derive the following estimate by testing with $C |\bu|^{C-2} \bu$ in the momentum equation.

\begin{lem}
\label{T41}
Let $2 < C < \infty$.
For all $t \in [0,T]$
\[\frac{d}{dt} \intO \rho |\bu|^C 
+ C \mu \intO |\bu|^{C-2} |\nabla \bu|^2 
\lesssim \left(\intO |\divg \bu|^3\right)^{C/3} + \intO p^2 |\bu|^{C-2}.\]
\end{lem}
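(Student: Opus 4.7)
The plan is to test the momentum equation $\rho\tu = L\bu - \nabla p$ with $C|\bu|^{C-2}\bu$ and integrate over $\Omega$; the continuity equation then yields $\frac{d}{dt}\intO \rho|\bu|^C$ on the left. I would expand $\intO L\bu\cdot C|\bu|^{C-2}\bu$ by integration by parts, using the identities
\[
\partial_i(|\bu|^{C-2}\bu_j) = (C-2)|\bu|^{C-4}(\bu_k\partial_i\bu_k)\bu_j + |\bu|^{C-2}\partial_i\bu_j,
\qquad
\divg(|\bu|^{C-2}\bu) = \tfrac{C-2}{2}|\bu|^{C-4}(\bu\cdot\nabla)|\bu|^2 + |\bu|^{C-2}\divg\bu,
\]
together with $\sum_i(\bu_k\partial_i\bu_k)^2 = \tfrac14|\nabla|\bu|^2|^2$. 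Four terms result: three are good (nonpositive and thus can be moved to the left), namely $-C\mu\intO|\bu|^{C-2}|\nabla\bu|^2$, $-\tfrac{C(C-2)\mu}{4}\intO|\bu|^{C-4}|\nabla|\bu|^2|^2$, and $-C(\lambda+\mu)\intO|\bu|^{C-2}(\divg\bu)^2$, using $\mu>0$ and $\lambda+\mu>0$. The fourth is the indefinite cross term $-\tfrac{C(C-2)(\lambda+\mu)}{2}\intO|\bu|^{C-4}(\bu\cdot\nabla)|\bu|^2\,\divg\bu$.

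I would estimate the cross term by Cauchy--Schwarz, bounding its absolute value by
\[
\Bigl(\intO|\bu|^{C-4}|\nabla|\bu|^2|^2\Bigr)^{1/2}\Bigl(\intO|\bu|^{C-2}(\divg\bu)^2\Bigr)^{1/2},
\]
and then Young's inequality splits it into $\epsilon\cdot\intO|\bu|^{C-4}|\nabla|\bu|^2|^2$ plus a constant times $\intO|\bu|^{C-2}(\divg\bu)^2$. For the pressure, integrate by parts to get $C\intO p\,\divg(|\bu|^{C-2}\bu)$, split it into two pieces by the formula above, and apply Young's inequality by pairing $p|\bu|^{(C-2)/2}$ with either $|\bu|^{(C-2)/2}|\divg\bu|$ or $|\bu|^{(C-4)/2}|\nabla|\bu|^2|$; this directly produces $\intO p^2|\bu|^{C-2}$ on the right while the remaining factors absorb into the two reservoir dissipations by taking the Young parameters small.

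After these absorptions the only uncontrolled leftover is a term of the form $K\intO|\bu|^{C-2}(\divg\bu)^2$ with $K=K(C,\lambda,\mu)$. I would bound it by H\"older as $\norm{\divg\bu}_3^2\,\norm{\bu}_{3(C-2)}^{C-2}$, then control $\norm{\bu}_{3(C-2)}^{C-2}$ through the Sobolev embedding $H^1(\T^3)\hookrightarrow L^6(\T^3)$ applied to $|\bu|^{C/2}$, using $|\nabla|\bu|^{C/2}|^2\le\tfrac{C^2}{4}|\bu|^{C-2}|\nabla\bu|^2$ to express the dominant gradient factor in terms of the dissipation. A final application of Young's inequality with exponents $C/2$ and $C/(C-2)$ then extracts $\norm{\divg\bu}_3^C = \bigl(\intO|\divg\bu|^3\bigr)^{C/3}$ while the remaining factor absorbs into $C\mu\intO|\bu|^{C-2}|\nabla\bu|^2$ on the left.

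The main obstacle is the coefficient bookkeeping. Since the reservoir dissipation $|\nabla|\bu|^2|^2$ carries the coefficient $\mu$ while the cross term introduced by $(\lambda+\mu)\nabla\divg\bu$ carries $\lambda+\mu$, purely absorbing that cross term into the two reservoirs would force the restrictive condition $(C-2)(\lambda+\mu)\le\mu$, which precludes large $C$; the resolution is to absorb only part of it and route the remaining $\intO|\bu|^{C-2}(\divg\bu)^2$ piece through the H\"older--Sobolev step so as to land on $\bigl(\intO|\divg\bu|^3\bigr)^{C/3}$. One also has to verify that the lower-order $\intO|\bu|^C$ term produced by the Sobolev inequality can be folded into the generic constant, using interpolation together with the standard energy identity $\intO\rho|\bu|^2\le E_0$.
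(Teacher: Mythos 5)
Your proof is correct and follows the paper's overall strategy: test the momentum equation with $C|\bu|^{C-2}\bu$, integrate by parts, use Sobolev applied to $|\bu|^{C/2}$ followed by Young to produce $\left(\intO|\divg\bu|^3\right)^{C/3}$, and treat the pressure term by integration by parts plus Young. The one place you diverge is the cross term $-(\lambda+\mu)\intO(\divg\bu)\,\bu\cdot\nabla|\bu|^{C-2}$. You apply Cauchy--Schwarz first, generating an indefinite piece proportional to $\intO|\bu|^{C-2}(\divg\bu)^2$, correctly diagnose that it cannot simply be absorbed by the $(\lambda+\mu)$-reservoir without restricting $C$, and then route it through H\"older against $\norm{\divg\bu}_3^2$, Sobolev, and Young. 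The paper avoids this detour entirely: it applies a three-factor H\"older directly to the cross term, bounding it by $\left(\intO|\divg\bu|^3\right)^{1/3}\left(\intO|\bu|^{3C}\right)^{(C-2)/(6C)}\left(\intO|\bu|^{C-2}|\nabla|\bu||^2\right)^{1/2}$, so a $\intO|\bu|^{C-2}(\divg\bu)^2$ term never appears and there is no coefficient competition to manage. Both routes land in the same place; the paper's is shorter but your diagnosis of the obstruction is the more transparent one. One caveat to your closing remark: the lower-order term that Sobolev produces on $\T^3$ is of the size $\intO|\bu|^C$ (equivalently $\norm{|\bu|^{C/2}}_2^2$), and the basic energy identity bounds $\intO\rho|\bu|^2$, which does not dominate $\intO|\bu|^2$ when vacuum is permitted, so the interpolation you sketch does not close as stated; the paper silently drops this lower-order term as well, so this is a shared wrinkle rather than a defect you introduce, but it should not be waved away via the energy bound.
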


\begin{proof}
A direct calculation gives
\[\intO \rho \p_t |\bu|^C = C \intO \rho \p_t \bu \cdot |\bu|^{C-2} \bu.\]
Using the continuity equation and integrating by parts we have 
\[\intO \p_t \rho |\bu|^C 
= - \intO \divg (\rho \bu) |\bu|^C 
= C \intO \rho (\bu \cdot \nabla) \bu \cdot |\bu|^{C-2} \bu.\]
Summing the two identity yields 
\[\frac{d}{dt} \intO \rho |\bu|^C
= C \intO \rho \tu \cdot |\bu|^{C-2} \bu.\]
Integrating by parts yields 
\begin{align*}
- \intO L\bu \cdot |\bu|^{C-2} \bu 
&= \mu \intO |\bu|^{C-2} |\nabla \bu|^2 
+ (C-2) \mu \intO |\bu|^{C-2} |\nabla |\bu||^2 \\
&\quad + (\lambda+\mu) \intO |\bu|^{C-2} |\divg \bu|^2 
+ (\lambda+\mu) \intO (\divg \bu) \bu \cdot \nabla |\bu|^{C-2}.
\end{align*}
Hence, we have 
\begin{equation}
\label{E41}
\begin{split}
&\frac{1}{C} \frac{d}{dt} \intO \rho |\bu|^C
+ \mu \intO |\bu|^{C-2} |\nabla \bu|^2 \\
&\quad + (C-2)\mu \intO |\bu|^{C-2} |\nabla |\bu||^2 
+ (\lambda +\mu) \intO |\bu|^{C-2} |\divg \bu|^2 \\
&= - (\lambda+\mu) \intO (\divg \bu) \bu \cdot \nabla |\bu|^{C-2} 
- \intO \nabla p \cdot |\bu|^{C-2}\bu \\
&=: I_1 + I_2.
\end{split}
\end{equation}
By the Sobolev inequality 
\begin{align*}
I_1 
&\le (C-2) (\lambda+\mu) \intO |\divg \bu| |\bu|^{C-2} |\nabla |\bu|| \\
&\lesssim (C-2) \left(\intO |\divg \bu|^3\right)^{1/3} 
\left(\intO |\bu|^{3C}\right)^{(C-2)/6C} 
\left(\intO |\bu|^{C-2} |\nabla |\bu||^2\right)^{1/2} \\
&\lesssim (C-2) C^{(C-2)/C} \left(\intO |\divg \bu|^3\right)^{1/3} 
\left(\intO |\bu|^{C-2} |\nabla |\bu||^2\right)^{(C-1)/C}.
\end{align*}
By the Young inequality we have for $\vep>0$ 
\[I_1 \le \vep \intO |\bu|^{C-2} |\nabla |\bu||^2 + \alpha_{\vep} (C-2)^C C^{(C-2)} \left(\intO |\divg \bu|^3\right)^{C/3}\]
and 
\begin{align*}
I_2 
&= \intO p |\bu|^{C-2} \divg \bu + p \bu \cdot \nabla |\bu|^{C-2} \\
&\le \vep \intO (|\bu|^{C-2} |\divg \bu|^2+|\bu|^{C-2} |\nabla|\bu||^2) + \alpha_{\vep} \intO p^2 |\bu|^{C-2}.
\end{align*}
for some constant $\alpha_{\vep}$.
Combining the identity \eqref{E41} and the estimates for $I_1$ and $I_2$ with a small fixed number $\vep$, we conclude the result.
We note that all the integrals in the proof converge.
\end{proof}

\begin{lem}
\label{T42}
Let $2 < C < \infty$.
If the condition \eqref{E40} holds for some $\beta$ satisfying 
\[\frac{2C\gamma-C+2}{2} \le \beta,\] 
then 
\[\sup_{0 \le t \le T} \norm{\rho^{1/C}\bu}_C < \infty.\]
\end{lem}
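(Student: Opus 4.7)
The plan is to integrate the differential inequality from Lemma~\ref{T41} after bounding the pressure term by a Hölder split that distributes the factors of $\rho$ between the prescribed norm $\norm{\rho}_\beta$ and the quantity $\norm{\rho^{1/C}\bu}_C$ we want to control. Setting $y(t) := \intO \rho|\bu|^C$, Lemma~\ref{T41} together with $p=\rho^\gamma$ gives
\[
y'(t) \;\lesssim\; \norm{\divg\bu}_3^{C} \;+\; \intO \rho^{2\gamma}|\bu|^{C-2}.
\]
The hypothesis \eqref{E40} handles the first term immediately, so the whole game is to bound the pressure integral by a sublinear power of $y$.

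The key algebraic step: write
\[
\rho^{2\gamma}|\bu|^{C-2} \;=\; \rho^{2\gamma - (C-2)/C}\,\bigl(\rho^{1/C}|\bu|\bigr)^{C-2},
\]
which allocates exactly one factor of $\rho^{1/C}$ to each copy of $|\bu|$. The remaining exponent on $\rho$ is $(2C\gamma - C + 2)/C$. Applying Hölder with conjugate exponents $C/2$ and $C/(C-2)$,
\[
\intO \rho^{2\gamma}|\bu|^{C-2} \;\le\; \norm{\rho}_{(2C\gamma-C+2)/2}^{(2C\gamma-C+2)/C} \;\norm{\rho^{1/C}\bu}_C^{C-2},
\]
and this is precisely where the hypothesis $\beta \ge (2C\gamma-C+2)/2$ is needed: the first factor is then controlled by $\norm{\rho}_\beta$ via an interpolation/inclusion using that $\intO \rho = \intO \rho_0$ is conserved (so lower $L^r$ norms of $\rho$ are automatically bounded).

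Combining these estimates with the uniform bound from \eqref{E40} produces constants $K_1,K_2 < \infty$, depending only on $T$, the assumed supremum, $\gamma$, $C$, and the initial mass, such that
\[
y'(t) \;\le\; K_1 \;+\; K_2\, y(t)^{(C-2)/C} \qquad \text{for a.e.\ } t \in [0,T].
\]
Since $C>2$ forces $(C-2)/C < 1$, this is a sublinear Bernoulli-type inequality: by direct ODE comparison with $z' = K_1 + K_2 z^{(C-2)/C}$, the solution cannot reach infinity in finite time, so $y$ stays bounded on $[0,T]$ in terms of $y(0)=\intO \rho_0|\bu_0|^C$, which is finite by the regularity in Theorem~\ref{T1}. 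This yields $\sup_{0\le t\le T}\norm{\rho^{1/C}\bu}_C < \infty$.

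The only nontrivial point is the Hölder split, whose exponents were chosen to make $\beta = (2C\gamma-C+2)/2$ the sharp threshold in the statement; everything else is either Lemma~\ref{T41}, Hölder's inequality, or a one-line ODE argument. No technical obstacle is expected, but one should verify at the end that all integrals appearing are finite for the class of strong solutions from Theorem~\ref{T1}, which follows from the stated regularity and the bound $\rho\in L^\infty(0,T;L^1\cap L^{something})$ coming from mass conservation together with \eqref{E40}.
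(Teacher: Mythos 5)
Your proof is correct and follows essentially the same route as the paper: invoke Lemma~\ref{T41}, split $\rho^{2\gamma}|\bu|^{C-2}$ by H\"older with exponents $C/2$ and $C/(C-2)$ to isolate $\norm{\rho}_{(2C\gamma-C+2)/2}$ against $(\intO \rho|\bu|^C)^{(C-2)/C}$, then close the differential inequality. The only cosmetic difference is at the end: the paper first applies Young's inequality to obtain $y' \lesssim y+1$ and then quotes Gronwall, whereas you integrate the sublinear Bernoulli inequality $y'\le K_1+K_2\,y^{(C-2)/C}$ directly; both arguments are valid and give the required uniform bound.
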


\begin{proof}
By Lemma \ref{T41} we have for all $t \in [0,T]$
\[\frac{d}{dt} \intO \rho |\bu|^C \lesssim 1 + \intO p^2 |\bu|^{C-2}.\]
Since $p=\rho^\gamma$, we have by the H\"older inequality 
\[\intO p^2 |\bu|^{C-2} 
\le \left(\intO \rho^{(2C\gamma-C+2)/2}\right)^{2/C} 
\left(\intO \rho |\bu|^C\right)^{(C-2)/C}.\]
Young's inequality yields
\[\frac{d}{dt} \intO \rho |\bu|^C \lesssim \intO \rho |\bu|^C + 1\]
The result follows from the Gronwall lemma.
\end{proof}

\begin{lem}
\label{T43}
Let $8 \le C < \infty$.
If the condition \eqref{E40} holds for some $\beta$ satisfying 
\[\max\set{\frac{2C\gamma-C+2}{2}, \frac{3C-6}{C-6}} \le \beta,\]
then 
\[\int_0^T \intO \rho |\tu|^2 + \sup_{0 \le t \le T} \intO |\nabla \bu|^2< \infty.\]
\end{lem}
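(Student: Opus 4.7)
The plan is to multiply the momentum equation by the material derivative $\tu$, integrate over $\Omega$, and thereby produce the quantity $\int_\Omega \rho|\tu|^2$ together with the time derivative of an $H^1$-type energy of $\bu$. The right-hand side, which consists of cubic terms in $\nabla\bu$ and pressure--velocity products, will then be absorbed into the left-hand side by means of the elliptic identities \eqref{E21} and \eqref{E22}, and the resulting differential inequality closed by Gronwall.

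Concretely, taking the $L^2$ inner product of $\rho\tu = L\bu-\nabla p$ with $\tu$, integrating by parts in space, and using the transport equation $\partial_t p + \divg(p\bu) + (\gamma-1)p\,\divg\bu = 0$ (the continuity equation written for $p=\rho^\gamma$) to eliminate $\partial_t p$ in $\int_\Omega \tu\cdot\nabla p$, I arrive at an identity of the form
\[
\frac{d}{dt}\Phi(t) + \int_\Omega \rho|\tu|^2 = R_{\mathrm{cub}}(t) + (\gamma-1)\int_\Omega p\,|\divg\bu|^2 + \int_\Omega p\,\partial_i u_j\partial_j u_i,
\]
where $\Phi(t) = \tfrac{\mu}{2}\|\nabla\bu\|_2^2 + \tfrac{\lambda+\mu}{2}\|\divg\bu\|_2^2 - \int_\Omega p\,\divg\bu$ and $R_{\mathrm{cub}}(t)$ gathers the cubic terms $\int(\nabla\bu)^3$, $\int\divg\bu\,|\nabla\bu|^2$ and $\int|\divg\bu|^3$ coming from $\int L\bu\cdot(\bu\cdot\nabla)\bu$.

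The pressure-quadratic integrals and the pure-$\divg\bu$ part of $R_{\mathrm{cub}}$ are controlled immediately by H\"older and \eqref{E40}; the serious work is with the cubic contribution involving the solenoidal part of $\bu$. For this I use $\bu = \cP\bu + (I-\cP)\bu$. The potential part obeys $\|\nabla(I-\cP)\bu\|_3 \lesssim \|\divg\bu\|_3$ by Calder\'on--Zygmund and is bounded. For the solenoidal part, the identity \eqref{E21} together with elliptic regularity and Sobolev embedding gives $\|\nabla\cP\bu\|_{r^*} \lesssim \|\rho\tu\|_r$ for suitable $r<2$ and $r^*\ge 3$, and the H\"older split
\[
\|\rho\tu\|_r = \|\rho^{1/2}\cdot\rho^{1/2}\tu\|_r \le \|\rho\|_\beta^{1/2}\left(\int_\Omega \rho|\tu|^2\right)^{1/2}
\]
with $r=2\beta/(\beta+1)$ converts this into a bound in terms of the left-hand side. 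Interpolation with $\|\nabla\bu\|_2$ and Young's inequality reduce $|R_{\mathrm{cub}}|$ to $\tfrac14\int_\Omega\rho|\tu|^2 + C(1+\Phi(t))$. The second threshold $\beta \ge (3C-6)/(C-6)$ enters through the H\"older budget for the cross pressure terms that are produced after this interpolation, where the $L^C$ control $\sup_t\|\rho^{1/C}\bu\|_C<\infty$ from Lemma~\ref{T42} is combined with $\|\rho\|_\beta$; equivalently, it is the integrability exponent of $\rho$ that is left over once $\rho^{1/C}\bu$ is factored out in the H\"older chain.

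The main obstacle is the absence of an $L^\infty$ bound on $\rho$: every factor of $\rho$ produced by the elliptic identities has to be absorbed through the single integral norm $\|\rho\|_\beta$. This is what forces the use of the weaker $L^{r^*}$ estimate on $\nabla\cP\bu$ in place of the standard $L^6$ one, and the two lower bounds on $\beta$ in the statement are exactly the thresholds that make the resulting exponents compatible with the cubic structure on the right side. Once the absorption is complete I reach
\[
\frac{d}{dt}\Phi(t) + \tfrac12\int_\Omega \rho|\tu|^2 \le C(1+\Phi(t)),
\]
and Gronwall on $[0,T]$ yields both $\sup_{t\in[0,T]}\|\nabla\bu\|_2<\infty$ and $\int_0^T\!\int_\Omega\rho|\tu|^2<\infty$.
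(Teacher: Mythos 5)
Your identity, obtained by testing with $\tu$ instead of $\p_t \bu$, is algebraically equivalent to the paper's \eqref{E42}, so the set-up is fine. The gap is in how you plan to estimate $R_{\mathrm{cub}}=\intO L\bu\cdot(\bu\cdot\nabla)\bu$. You propose integrating this by parts so that $R_{\mathrm{cub}}$ becomes a pure cubic $\intO(\nabla\bu)^3$-type integral, and then controlling it through $\norm{\nabla\cP\bu}_{r^*}\lesssim\norm{\rho\tu}_r\lesssim\norm{\rho}_\beta^{1/2}\norm{\rho^{1/2}\tu}_2$. But with only $\rho\in L^\beta$ for finite $\beta$ (so $r=2\beta/(\beta+1)<2$ and $r^*=6\beta/(\beta+3)<6$), the Gagliardo--Nirenberg interpolation $\norm{\nabla\bu}_3\lesssim\norm{\nabla\bu}_2^{1-\theta}\norm{\nabla\bu}_{r^*}^{\theta}$ forces an exponent budget that cannot close: writing $\norm{\nabla\bu}_3^3\lesssim\norm{\nabla\bu}_2^{3(1-\theta)}\norm{\rho^{1/2}\tu}_2^{3\theta}$, Young's inequality requires $3\theta\le 2$ to extract $\vep\norm{\rho^{1/2}\tu}_2^2$, and the conjugate power of $\norm{\nabla\bu}_2$ is then $6(1-\theta)/(2-3\theta)$, which is $\ge 3>2$ for every admissible $\theta$. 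In other words, you always end up with a term like $\vep\intO\rho|\tu|^2+C\norm{\nabla\bu}_2^{2+\sigma}$ with $\sigma>0$, and the differential inequality $\dot\Phi+\tfrac12\intO\rho|\tu|^2\lesssim 1+\Phi^{1+\sigma/2}$ does not yield a global-in-$[0,T]$ bound from Gronwall with only the a priori $\int_0^T\Phi<\infty$ from the basic energy inequality. (This superlinear structure is exactly what the paper defers to Lemma~\ref{T44}, where it \emph{can} be handled because $\int_0^T\intO\rho|\tu|^2<\infty$ is by then already known from this very lemma.)

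The idea you are missing is the specific bookkeeping in the paper's Step~1: test with $\p_t\bu$ and write $\p_t\bu=\tu-(\bu\cdot\nabla)\bu$ \emph{only} in the $\rho\tu\cdot\p_t\bu$ pairing, leaving $L\bu\cdot\p_t\bu$ and $\nabla p\cdot\p_t\bu$ intact to produce the clean $\frac{d}{dt}$-terms. The resulting error is then $I_3=\intO\rho\tu\cdot(\bu\cdot\nabla)\bu$, which carries a $\rho^{1/2}\tu$ factor and is only \emph{quadratic} in derivatives; H\"older against $\norm{\rho^{1/C}\bu}_C$ (Lemma~\ref{T42}) and a single $\vep$-interpolation on $\norm{\nabla\bu}_3$ then give $I_3\lesssim\vep\norm{\rho^{1/2}\tu}_2^2+\alpha_\vep$, which \emph{is} absorbable. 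Equivalently, in your set-up you should substitute the momentum equation $L\bu=\rho\tu+\nabla p$ into $R_{\mathrm{cub}}$ rather than integrate by parts: then $R_{\mathrm{cub}}=I_3+\intO\nabla p\cdot(\bu\cdot\nabla)\bu$, and the new pressure term recombines with the $\intO p\,\p_i u_j\p_j u_i$ and $(\gamma-1)\intO p|\divg\bu|^2$ already on your right-hand side to reproduce the paper's $I_1+I_2$ (after introducing $G=(\lambda+2\mu)\divg\bu-p$ and adding $\frac{1}{\lambda+2\mu}p^2$ to $\Phi$). As written, with $R_{\mathrm{cub}}$ estimated as a pure gradient cubic, the argument does not close.
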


\begin{proof}
We divide the proof into a few steps.
\begin{enumerate}[\bf{Step} 1)]
\item
Testing with $\p_t \bu$ in the equation \eqref{E12} and integrating by parts yields
\begin{align*}
0
&= \intO (\rho \tu - L\bu + \nabla p) \cdot \p_t \bu \\
&= \intO \rho \tu \cdot (\tu - (\bu\cdot\nabla)\bu) 
- \intO L\bu \cdot \p_t \bu  - \intO p \p_t \divg \bu \\
&= \intO \rho |\tu|^2 
+ \frac{1}{2} \frac{d}{dt} \intO \left(\mu |\nabla \bu|^2 +(\lambda+\mu) |\divg \bu|^2 - 2p \divg \bu\right) \\
&\quad + \intO \p_t p  \divg \bu 
- \intO \rho \tu \cdot  (\bu\cdot\nabla)\bu.
\end{align*}
Since $(\lambda+2\mu) \divg \bu = p+G$ and $\lambda+2\mu>0$, we have 
\begin{align*}
&(\lambda+2\mu) \intO \p_t p \divg \bu \\
&= \frac{1}{2} \frac{d}{dt} \intO p^2 + \intO \p_t p G \\
&= \frac{1}{2} \frac{d}{dt} \intO p^2 
+ \intO (\p_t p + \divg (p\bu)) G 
+ \intO p\bu \cdot \nabla G. 
\end{align*}	
Thus, 
\begin{equation}
\label{E42}
\begin{split}
&\intO \rho |\tu|^2 + \frac{1}{2} \frac{d}{dt} \intO \left(\mu |\nabla \bu|^2 +(\lambda+\mu) |\divg \bu|^2 - 2p\divg \bu + \frac{1}{\lambda+2\mu} p^2\right) \\
&= - \frac{1}{\lambda+2\mu} \intO (\p_t p + \divg (p\bu)) G 
- \frac{1}{\lambda+2\mu} \intO p\bu \cdot \nabla G 
+ \intO \rho \tu \cdot (\bu \cdot \nabla) \bu \\
&=: I_1 + I_2 + I_3.
\end{split}
\end{equation}
\item
Since $p = \rho^\gamma$ and $\p_t \rho = - \divg (\rho \bu)$, we have 
\[\p_t p + \divg (p\bu) = - (\gamma-1) p \divg \bu\]
and hence 
\[I_1 \lesssim \intO |p \divg \bu G|.\]
Because of the periodic boundary condition, we have $\intO \divg \bu  = 0$ and hence by the Jensen inequality $\inn{G} = - \inn{p} \lesssim \norm{p}_{3/2}$. 
Using the Sobolev-type inequality and the relation $\divg(\rho \tu) = \Delta G$ we obtain that 
\begin{align*}
I_1 
&\le \intO |p\divg \bu||G-\inn{G}| + |\inn{G}| \intO |p\divg \bu| \\
&\lesssim \norm{p}_{(3C-6)/C} 
\norm{\divg \bu}_3 
\norm{\nabla G}_{(3C-6)/(2C-6)} 
+ \norm{p}_{3/2}^2 \norm{\divg \bu}_3 \\
&\lesssim \norm{p}_{(3C-6)/C} \norm{\rho \tu}_{(3C-6)/(2C-6)}
+ \norm{p}_{3/2}^2.
\end{align*}
An elementary calculation shows that for $8 \le C < \infty$
\[\frac{\gamma (3C-6)}{C} \le \frac{2C\gamma-C+2}{2} \le \beta\] 
and hence $\norm{p}_{3/2} \lesssim \norm{p}_{(3C-6)/C} < \infty$.
H\"older's inequality yields
\begin{align*}
\norm{\rho \tu}_{(3C-6)/(2C-6)} 
&\le \norm{\rho^{1/2}}_{(6C-12)/(C-6)} \norm{\rho^{1/2} \tu}_2 \\
&= \norm{\rho}_{(3C-6)/(C-6)}^{1/2} \norm{\rho^{1/2} \tu}_2 \\
&\lesssim \norm{\rho^{1/2} \tu}_2.
\end{align*}
Therefore 
\begin{equation}
I_1 \lesssim \norm{\rho^{1/2} \tu}_2 + 1.
\end{equation}
\item
Using Lemma \ref{T52} and the relation $\divg(\rho \tu) = \Delta G$, we obtain 
\begin{align*}
I_2 
&= - \frac{1}{\lambda+2\mu} \intO p\bu \cdot \nabla G \\
&\lesssim \norm{\rho^{1/C}\bu}_C 
\norm{\rho^{(C\gamma-1)/C}}_{(3C^2-6C)/(C^2-3C+6)} 
\norm{\nabla G}_{(3C-6)/(2C-6)} \\
&\lesssim \norm{\rho}_{(3C^2\gamma-6C\gamma-3C+6)/(C^2-3C+6)}^{(C\gamma-1)/C} 
\norm{\rho \tu}_{(3C-6)/(2C-6)}.
\end{align*}
An elementary calculation shows that for $8 \le C < \infty$
\[\frac{3C^2\gamma-6C\gamma-3C+6}{C^2-3C+6} \le \frac{2C\gamma-C+2}{2} \le \beta\] 
and hence $\norm{\rho}_{(3C^2\gamma-6C\gamma-3C+6)/(C^2-3C+6)} < \infty$.
Similarly, H\"older's inequality yields
\begin{equation}
I_2 \lesssim \norm{\rho \tu}_{(3C-6)/(2C-6)} \lesssim \norm{\rho^{1/2} \tu}_2.
\end{equation}
\item
H\"older's inequality yields
\begin{align*}
I_3
&= \intO \rho \tu \cdot (\bu \cdot \nabla) \bu \\
&\lesssim \norm{\rho^{1/2-1/C}}_{6C/(C-6)} \norm{\rho^{1/C}\bu}_C \norm{\rho^{1/2} \tu}_2 \norm{\nabla \bu}_3 \\
&\lesssim \norm{\rho}_{(3C-6)/(C-6)}^{(C-2)/(2C)} \norm{\rho^{1/2} \tu}_2 \norm{\nabla \bu}_3 \\
&\lesssim \norm{\rho^{1/2} \tu}_2 \norm{\nabla \bu}_3.
\end{align*}
Using Helmholz--Leray decomposition of $\bu$ and an interpolation, we obtain for each $\vep > 0$ there is a positive constant $\alpha_\vep$ such that 
\begin{align*}
\norm{\nabla \bu}_3
&\lesssim \norm{\divg \bu}_3 + \norm{\nabla\cP \bu}_3 \\
&\lesssim \vep \norm{\nabla^2 \cP \bu}_{5/3} + \alpha_\vep \\
&\lesssim \vep \norm{\rho \tu}_{5/3} + \alpha_\vep \\
&\le \vep \norm{\rho^{1/2}}_{10} \norm{\rho^{1/2}\tu}_2 + \alpha_\vep.
\end{align*}
An elementary calculation shows that for $8 \le C < \infty$
\[5 \le \frac{2C\gamma-C+2}{2} \le \beta\] 
and hence $\norm{\rho^{1/2}}_{10} = \norm{\rho}_{5}^{1/2} < \infty$.
Therefore, we have for each $\vep > 0$ there is a positive constant $\alpha_\vep$ such that  
\begin{equation}
I_3 \lesssim \norm{\rho^{1/2} \tu}_2 \norm{\nabla \bu}_3 
\lesssim \vep \norm{\rho^{1/2}\tu}_2^2 + \alpha_\vep
\end{equation}
\item 
Combining the identity \eqref{E42} and the estimates for $I_1$, $I_2$, and $I_3$ with a fixed small $\vep>0$, we obtain that for some positive constant $\alpha$
\begin{align*}
&\intO \rho |\tu|^2 + \frac{d}{dt} \intO \left(\mu |\nabla \bu|^2 +(\lambda+\mu) |\divg \bu|^2 - 2p \divg \bu + \frac{p^2}{\lambda+2\mu}\right) \\
&\le \frac{1}{4} \norm{\rho^{1/2}\tu}_2^2 + \alpha (\norm{\rho^{1/2}\tu}_2 + 1) \\
&\le \frac{1}{2} \norm{\rho^{1/2}\tu}_2^2 + \alpha^2 + \alpha. 
\end{align*}
Integrate in time over $[0,t]$ we obtain 
\begin{align*}
&\frac{1}{2} \int_0^t \intO \rho |\tu|^2 + \intO \left(\mu |\nabla \bu|^2 +(\lambda+\mu) |\divg \bu|^2 + \frac{p^2}{\lambda+2\mu}\right)(t) \\
&\le 2 \sup_{0 \le t \le T} \intO |p \divg \bu| + (\alpha^2 + \alpha)T + \alpha_0.
\end{align*}
where the constant $\alpha_0$ depends only on the initial data and the physical constants $\mu$ and $\lambda$.
Since $\intO |p \divg \bu| \le \norm{p}_{3/2} \norm{\divg \bu}_3 < \infty$, we conclude the result by taking supremum for $t \in [0,T]$.
\end{enumerate}
This completes the proof of Lemma \ref{T43}.
\end{proof}

We derive a uniform energy estimate for the material derivative by modifying the aguement in Sun--Wang--Zhang \cite{SWZ}.

\begin{lem}
\label{T44}
Let $8 \le C < \infty$.
If the condition \eqref{E40} holds for some $\beta$ satisfying 
\[\max\set{\frac{2C\gamma-C+2}{2}, \frac{3C-6}{C-6}} \le \beta,\]
then 
\[\sup_{0 \le t \le T} \intO \rho |\tu|^2 + \int_0^T \intO |\nabla \tu|^2 < \infty.\]
\end{lem}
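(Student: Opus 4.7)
The strategy is to differentiate the momentum equation along the material flow and test the resulting equation against $\tu$, in the spirit of Sun--Wang--Zhang \cite{SWZ}, but adapted to our setting where $\norm{\rho}_\infty$ is unavailable and only the integrability \eqref{E40} together with the conclusions of Lemma \ref{T43} can be used. Concretely, I would apply the operator $\p_t + \divg(\bu \otimes \cdot)$ to both sides of $\rho \tu = L\bu - \nabla p$. The continuity equation reduces the left-hand side to $\rho\widetilde{\tu}$. Testing with $\tu$ and integrating by parts over $\Omega=\T^3$ then yields an energy identity of the schematic form
\[\frac{1}{2}\frac{d}{dt}\intO \rho|\tu|^2 + \intO\left(\mu|\nabla\tu|^2 + (\lambda+\mu)|\divg\tu|^2\right) = J_1 + J_2,\]
where $J_1$ collects cubic commutators of the type $\nabla\bu\cdot\nabla\bu\cdot\nabla\tu$ arising when one swaps the material derivative past the Lam\'e operator, and $J_2$ collects pressure-type terms produced by $\widetilde{p} = -\gamma p\,\divg\bu$, typically $\intO p\,\divg\bu\,\divg\tu$ and $\intO(\p_i\bu_k)(\p_k p)\tu_i$ after an integration by parts.

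The heart of the argument is the estimation of $J_1$ and $J_2$. For $J_1$, I would apply Cauchy--Schwarz and then split $\nabla\bu = \nabla\cP\bu + \nabla(I-\cP)\bu$: the divergence part is controlled by $\norm{\divg\bu}_3\in L^\infty_t$ from \eqref{E40}, while the rotational part is controlled through $\mu\Delta\cP\bu = \cP(\rho\tu)$ and the Sobolev embedding $W^{1,5/3}\hookrightarrow L^3$ exactly as in Step 4 of Lemma \ref{T43}, giving $\norm{\nabla\cP\bu}_3 \lesssim \norm{\rho}_5^{1/2}\norm{\rho^{1/2}\tu}_2$; the $L^5$-norm of $\rho$ is finite since $5\le(2C\gamma-C+2)/2\le\beta$ for $C\ge 8$. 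For $J_2$, the pressure is placed in $L^r$ with $r$ chosen by H\"older to match $\norm{\rho^{1/2}\tu}_2$ and $\norm{\nabla\tu}_2$; the assumption $\beta\ge(2C\gamma-C+2)/2$ guarantees that the required $L^{\gamma r}$-norms of $\rho$ are finite, and $\beta\ge(3C-6)/(C-6)$ feeds the Sobolev chain in the same way as in Lemma \ref{T43}. After absorbing $\vep\norm{\nabla\tu}_2^2$ into the left-hand side by Young, the differential inequality takes the form
\[\frac{d}{dt}\norm{\rho^{1/2}\tu}_2^2 + \norm{\nabla\tu}_2^2 \lesssim F_1(t)\norm{\rho^{1/2}\tu}_2^2 + F_2(t),\]
with $F_1, F_2 \in L^1(0,T)$ thanks to Lemma \ref{T43}, which supplies $\int_0^T\norm{\rho^{1/2}\tu}_2^2\,dt<\infty$ and $\sup_{t}\norm{\nabla\bu}_2<\infty$, together with \eqref{E40}.

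A Gronwall argument then closes both conclusions simultaneously, once the initial value $\norm{\rho^{1/2}\tu}_2^2|_{t=0} = \norm{g_1}_2^2$ is noted to be finite by the compatibility condition in Theorem \ref{T1}. The main obstacle I expect is the careful bookkeeping of Lebesgue exponents in the trilinear estimates for $J_1$ and $J_2$: without an $L^\infty$-bound on $\rho$ to act as a buffer (as in \cite{SWZ}), every weight $\rho^a$ must be interpolated against $\norm{\rho}_\beta$, and one has to verify that the prescribed range $\beta\ge\max\set{(2C\gamma-C+2)/2,\,(3C-6)/(C-6)}$ is actually enough for each H\"older chain --- in particular for the rotational-part estimate and for the pressure commutator --- to close uniformly on $[0,T]$.
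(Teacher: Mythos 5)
Your plan coincides, in structure and in all the essential ideas, with the paper's own proof: apply the material derivative to the momentum equation, test against $\tu$, bound the Lam\'e commutators by $\intO |\nabla\bu|^2|\nabla\tu|$, bound the pressure terms using $\p_t p + \divg(p\bu) = -(\gamma-1)p\divg\bu$, recover $\nabla\bu$ through the effective viscous flux $G$ and the rotational projection $\cP\bu$, and close by a Gronwall argument fuelled by $\int_0^T \intO\rho|\tu|^2<\infty$ from Lemma~\ref{T43}. One exponent in your sketch is off: to absorb the commutator $\intO|\nabla\bu|^2|\nabla\tu|\le \norm{\nabla\bu}_4^2\norm{\nabla\tu}_2$ against the good term $\vep\norm{\nabla\tu}_2^2$, you need an $L^4$ bound on $\nabla\bu$, not the $L^3$ bound $\norm{\nabla\cP\bu}_3\lesssim\norm{\rho}_5^{1/2}\norm{\rho^{1/2}\tu}_2$ that you cite from Step~4 of Lemma~\ref{T43}. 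The paper instead derives $\norm{\nabla\bu}_4 \lesssim \norm{\nabla G}_{12/7} + \norm{\nabla^2\cP\bu}_{12/7} + \norm{p}_4 \lesssim \norm{\rho^{1/2}}_{12}\norm{\rho^{1/2}\tu}_2 + 1$, which is what produces the quadratic inequality $\frac{d}{dt}Y + Z \le \alpha Y^2 + \alpha$; your reformulation $F_1(t)Y + F_2(t)$ with $F_1=\alpha Y\in L^1$ from Lemma~\ref{T43} is then the same Gronwall step in different notation. The hypothesis $\beta \ge \max\{(2C\gamma-C+2)/2,\,(3C-6)/(C-6)\}$ with $C\ge 8$ does furnish the needed $\norm{\rho}_6$ and $\norm{p}_4$ bounds, so the $L^4$ chain closes; with that correction the proposal is sound and is the paper's argument.
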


\begin{proof}
We divide the proof into a few steps.
\begin{enumerate}
[\bf{Step} 1)]
\item
Taking the material derivative to $L\bu$ yields 
\[L(\partial_t \bu) + \divg (L\bu \otimes \bu) - \divg \bu L\bu.\]
Taking the material derivative to $\rho \tu  + \nabla p$ yields 
\begin{align*}
&\partial_t (\rho \tu ) + (\bu \cdot \nabla) (\rho \tu ) 
+ \partial_t \nabla p + (\bu \cdot \nabla) \nabla p \\
&= - \divg (\rho \bu) \tu  + \rho \partial_t \tu  
+ (\nabla \rho \cdot \bu) \tu  + \rho (\bu \cdot \nabla) \tu  
+ \nabla \partial_t p + \divg (\nabla p \otimes \bu) - (\nabla p) \divg \bu \\
&= \rho \partial_t \tu  + \rho (\bu \cdot \nabla) \tu  
+ \nabla \partial_t p + \divg (\nabla p \otimes \bu) - \divg \bu L\bu.
\end{align*}
Hence, from the momentum equation, we obtain  
\[[\rho \partial_t \tu  + \rho (\bu \cdot \nabla) \tu ] 
- [L(\partial_t \bu) + \divg (L\bu \otimes \bu)]
= - [\nabla \partial_t p + \divg (\nabla p \otimes \bu)].\]
Multiplying $\tu $ and integrating on $\Omega$ yields 
\begin{equation}
\label{E43}
\frac{1}{2} \frac{d}{dt} \intO \rho |\tu |^2 
- \intO [L(\partial_t \bu) + \divg (L\bu \otimes \bu)] \cdot \tu 
= \intO \partial_t p \divg \tu  + (\bu \cdot \nabla) \tu  \cdot \nabla p.
\end{equation}
\item
Now, we estimate the nonlinear terms.
Integrating by parts we have 
\begin{align*}
&\intO |\nabla \tu|^2 + \intO [\Delta \partial_t \bu + \divg (\Delta u \otimes \bu)] \cdot \tu \\
&= \intO [-\Delta ((\bu \cdot \nabla) \bu) + \divg (\Delta u \otimes \bu)] \cdot \tu \\
&\ll \intO |\nabla \bu|^2 |\nabla \tu|.
\end{align*}
Similarly, we have 
\begin{align*}
&\intO |\divg \tu|^2 + \intO [\nabla \divg \partial_t \bu + \divg (\nabla \divg \bu \otimes \bu)] \cdot \tu \\
&= \intO [-\nabla \divg((\bu \cdot \nabla) \bu) + \divg (\nabla \divg \bu \otimes \bu)] \cdot \tu \\
&\ll \intO |\nabla \bu|^2 |\nabla \tu|.
\end{align*}
Therefore, the Cauchy inequality yields for $\vep > 0$ 
\begin{equation}
\label{E44}
\begin{split}
&\mu \intO |\nabla \tu|^2 
+ (\lambda+\mu) \intO |\divg \tu|^2  
+ \intO [L(\partial_t \bu) + \divg (L\bu \otimes \bu)] \cdot \tu \\
&\ll \intO |\nabla \bu|^2 |\nabla \tu| 
\ll \vep \intO |\nabla \tu|^2 + \alpha_\vep \intO |\nabla \bu|^4
\end{split}
\end{equation}
for some positive number $\alpha_\vep$.
For the computations in Step 1 and 2, we follow the lines in Sun--Wang--Zhang \cite{SWZ}.
\item
For the barotropic gases, we can easily verify the realtion 
\[\p_t p + \divg (p\bu) = - (\gamma-1) p \divg \bu\]
by a direct computation.
Hence, integrating by parts yields 
\begin{align*}
&\intO \partial_t p \divg \tu + (\bu \cdot \nabla) \tu  \cdot \nabla p \\
&= \intO \partial_t p \divg \tu - p \bu \cdot \nabla \divg \tu - p (\nabla \bu)^T : \nabla \tu \\
&= \intO (\partial_t p + \divg (p \bu)) \divg \tu - p (\nabla \bu)^T : \nabla \tu \\
&\le (\norm{\partial_t p + \divg (p \bu)}_2 + \norm{p \nabla \bu}_2) \norm{\nabla \tu}_2 \\
&\ll \norm{p}_4 \norm{\nabla \bu}_4 \norm{\nabla \tu}_2.
\end{align*}
An elementary calculation shows that for $8 \le C < \infty$
\[4\gamma < \frac{2C\gamma-C+2}{2}\]
and hence $\norm{p}_4 \le \norm{\rho}_{4\gamma}^\gamma < \infty$.
By the Cauchy inequality we obtain that for $\vep > 0$
\begin{equation}
\label{E45}
\intO \partial_t p \divg \tu + (\bu \cdot \nabla) \tu  \cdot \nabla p 
\ll \vep \intO |\nabla \tu|^2 + \alpha_\vep \left(\intO |\nabla \bu|^4\right)^{1/2}
\end{equation}
for some positive number $\alpha_\vep$.
\item
Combining \eqref{E43}, \eqref{E44}, and \eqref{E45} with a fixed small number $\vep$, we get
\[\frac{d}{dt} \intO \rho |\tu|^2  + \intO |\nabla \tu|^2 
\ll \intO |\nabla \bu|^4 + 1.\]
Using the Helmholz--Leray decomposition of $\bu$ and an interpolation we have 
\begin{align*}
\norm{\nabla \bu}_4 
&\lesssim \norm{\divg \bu}_4 + \norm{\nabla\cP u}_4 
\lesssim \norm{G-\inn{G}}_4 + \norm{p-\inn{p}}_4 + \norm{\nabla\cP u}_4 \\
&\lesssim \norm{\nabla G}_{12/7} + \norm{\nabla^2 \cP u}_{12/7} + \norm{p}_4 
\lesssim \norm{\rho \tu}_{12/7} + 1 \\
&\lesssim \norm{\rho^{1/2}}_{12} \norm{\rho^{1/2}\tu}_2 + 1 
\lesssim \norm{\rho^{1/2}\tu}_2 + 1.
\end{align*}
If we denote 
\begin{align*}
Y(t) &= \intO \rho |\tu|^2(t) \\
Z(t) &= \intO |\nabla \tu|^2(t),
\end{align*}
then we have, for some positive constant $\alpha$, the inequality 
\begin{equation}
\label{E46}
\frac{d}{dt} Y(t) + Z(t)  \le \alpha Y(t)^2 + \alpha.
\end{equation}
\item
We observe that 
\begin{align*}
&\frac{d}{dt} \left(Y(t) \exp\left(-\alpha \int_0^t Y(s) ds\right)\right) \\
&= \left(\frac{d}{dt} Y(t) - \alpha Y(t)^2\right) 
\exp\left(-\alpha \int_0^t Y(s) ds\right) \\
&\le \alpha \exp\left(-\alpha \int_0^t Y(s) ds\right).
\end{align*}
Integrating over $[0,\tau]$ yields 
\[Y(\tau) \le Y(0) \exp\left(\alpha \int_0^\tau Y(s) ds\right) + \alpha \int_0^\tau \exp\left(\alpha \int_t^\tau Y(s) ds\right) dt.\]
From Lemma \ref{T43} 
\[\eta := \int_0^T Y(t) dt = \int_0^T \intO \rho |\tu|^2 < \infty.\]
Thus, for all $\tau \in [0,T]$
\[Y(\tau) \le Y(0) e^{\alpha \eta} + \alpha e^{\alpha \eta} T.\]
Since $\tau \in [0,T]$ is arbitrary, we obtain that 
\[\sup_{0 \le \tau \le T} Y(\tau) < \infty.\]
Moreover, we can use the inequality \eqref{E46} again to conclude that 
\[\int_0^T Z(t) dt = \int_0^T \intO |\nabla \tu|^2 \le \alpha T\left(1 + \sup_{0 \le t \le T} Y(t)^2\right).\]
\end{enumerate}
This completes the proof.
\end{proof}

The following lemma show that the higher integrability of $\rho$ and $\rho^{1/2} \tu$ yields the higher integrability of $\nabla \bu$. 

\begin{lem}
\label{T45}
Let $3< A < 6$.
Suppose 
\[\sup_{0 \le t \le T} \norm{\rho^{1/2}\tu}_2 < \infty\]
and $\sup_{0 \le t \le T} \norm{\rho}_\beta < \infty$ for some $\beta$ satisfying 
\[\max\set{A\gamma, \frac{3A}{6-A}} \le \beta.\]
Then 
\[\sup_{0 \le t \le T} \norm{\nabla \bu}_A < \infty.\]
\end{lem}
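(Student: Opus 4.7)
The plan is to decompose $\bu$ via the Helmholtz--Leray projection as $\bu = \cP \bu + (I-\cP)\bu$ and bound each component in $L^A$ using elliptic regularity plus Sobolev embedding, reducing everything to the two given hypotheses. For the solenoidal part, the identity \eqref{E21} gives $\mu \Delta \cP \bu = \cP(\rho \tu)$, so Calder\'on--Zygmund on $\T^3$ yields $\norm{\nabla^2 \cP \bu}_q \lesssim \norm{\rho \tu}_q$ for any $q\in(1,\infty)$; since $\nabla \cP \bu$ has zero mean on the torus, the Sobolev--Poincar\'e inequality gives $\norm{\nabla \cP \bu}_A \lesssim \norm{\nabla^2 \cP \bu}_{3A/(A+3)}$, provided $3<A<6$ (which is exactly the range for which $3A/(A+3)\in (3/2,2)$ and the embedding $W^{1,3A/(A+3)}\hookrightarrow L^A$ holds).

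For the curl-free part I would use the effective viscous flux relation $(\lambda+2\mu)\divg \bu = G + p$, reducing the question to estimating $\norm{G}_A$ and $\norm{p}_A$. Since $\langle G\rangle = -\langle p\rangle$ on $\T^3$, Sobolev--Poincar\'e gives $\norm{G}_A \lesssim \norm{\nabla G}_{3A/(A+3)} + \norm{p}_A$, and from \eqref{E22}, Calder\'on--Zygmund applied to $\Delta G = \divg(\rho \tu)$ gives $\norm{\nabla G}_{3A/(A+3)} \lesssim \norm{\rho\tu}_{3A/(A+3)}$. The potential part $(I-\cP)\bu$ contributes $\nabla (I-\cP)\bu = \nabla \Delta^{-1}\nabla \divg \bu$, which is controlled in $L^A$ by $\norm{\divg \bu}_A$ via Riesz transforms, hence by $\norm{G}_A + \norm{p}_A$.

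Both reductions now lead to estimating $\norm{\rho\tu}_{3A/(A+3)}$ and $\norm{p}_A$. By H\"older,
\[\norm{\rho\tu}_{3A/(A+3)} \le \norm{\rho^{1/2}}_{6A/(6-A)} \norm{\rho^{1/2}\tu}_2
= \norm{\rho}_{3A/(6-A)}^{1/2} \norm{\rho^{1/2}\tu}_2,\]
where the exponent identity $\tfrac{6-A}{6A} + \tfrac{1}{2} = \tfrac{A+3}{3A}$ is immediate; the hypothesis $\beta \ge 3A/(6-A)$ makes the first factor finite, and the second factor is bounded by assumption. For the pressure $p = \rho^\gamma$, $\norm{p}_A = \norm{\rho}_{A\gamma}^\gamma$ is finite by the hypothesis $\beta \ge A\gamma$. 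Collecting the pieces yields a time-uniform bound on $\norm{\nabla \bu}_A$.

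The proof is essentially a bookkeeping exercise, and I do not expect any genuine obstacle. The one point demanding care is the mean-value normalization on the torus: both $\nabla \cP \bu$ and (after subtracting a controlled constant) $G$ must have zero mean so that Sobolev--Poincar\'e is legitimate, and the constant $\langle G\rangle = -\langle p\rangle$ must be absorbed into the $\norm{p}_A$ term. The interval $3 < A < 6$ is tight: $A>3$ is needed so that $3A/(A+3)<3$ (for the Sobolev step to jump to $L^A$), and $A<6$ is needed so that the H\"older exponent $6A/(6-A)$ is finite, which is where the two conditions on $\beta$ naturally arise.
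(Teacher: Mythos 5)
Your proposal is correct and follows essentially the same route as the paper: Helmholtz--Leray decomposition, the effective viscous flux $G$, elliptic/Calder\'on--Zygmund estimates for $\cP\bu$ and $G$ from $\mu\Delta\cP\bu=\cP(\rho\tu)$ and $\Delta G=\divg(\rho\tu)$, Sobolev--Poincar\'e with the zero-mean normalization $\inn{G}=-\inn{p}$, and H\"older on $\rho\tu = \rho^{1/2}\cdot\rho^{1/2}\tu$; the exponent bookkeeping matches the paper's line-by-line. The only cosmetic difference is that you spell out the potential part $\nabla(I-\cP)\bu$ as a Riesz transform of $\divg\bu$, whereas the paper collapses this immediately into $\norm{\divg\bu}_A$.
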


\begin{proof}
We recall first that $(\lambda+2\mu) \divg \bu = p+G$ and $\lambda+2\mu>0$.
Because of the periodic boundary condition, we have $\intO \divg \bu  = 0$ and hence 
\[\inn{G} + \inn{p} = 0\]
where we use the notation $\inn{f} = \fint_\Omega f$.
We next recall the relations $\divg(\rho \tu) = \Delta G$ and $\cP (\rho \tu ) = \mu \Delta \cP \bu$. 
Using the Helmholz--Leray decomposition of $\bu$ and the elliptic estimate for $G$ and $\cP \bu$ we obtain that 
\begin{align*}
\norm{\nabla \bu}_A 
&\lesssim \norm{\divg \bu}_A + \norm{\nabla\cP \bu}_A \\
&\lesssim \norm{G-\inn{G}}_A + \norm{p-\inn{p}}_A + \norm{\nabla\cP \bu}_A \\
&\lesssim \norm{\nabla G}_{3A/(3+A)} + \norm{\nabla^2 \cP \bu}_{3A/(3+A)} + \norm{p}_A \\ 
&\lesssim \norm{\rho \tu}_{3A/(3+A)} + \norm{p}_A.
\end{align*}
Since we have $\norm{p}_A = \norm{\rho}_{A\gamma}^\gamma$ and 
\[\norm{\rho \tu}_{3A/(3+A)} \le \norm{\rho^{1/2}}_{6A/(6-A)} \norm{\rho^{1/2}\tu}_2 \le \norm{\rho}_{3A/(6-A)}^{1/2} \norm{\rho^{1/2}\tu}_2\]
by H\"older inequality, we conclude that 
\[\norm{\nabla \bu}_A \lesssim \norm{\rho^{1/2}\tu}_2 + 1.\]
Integrating in time yields the result.
\end{proof}

Now, we are ready to prove Theorem \ref{G1}.

\begin{proof}[\textbf{Proof of Theorem \ref{G1}}] 
Let $3< A < 6 < B \le 8 \le C < \infty$ with $1/A + 1/B < 1/3$.
Suppose the condition \eqref{E40} holds for 
\[\max\set{A\gamma, \frac{3A}{6-A}, \frac{2C\gamma-C+2}{2}, \frac{3C-6}{C-6}, \frac{(C-1)B}{C-B}} \le \beta.\] 
Combining Lemma \ref{T42}, Lemma \ref{T43}, Lemma \ref{T44}, and Lemma \ref{T45} we obtain that    
\[\sup_{0 \le t \le T} \left(\norm{\nabla \bu}_A + \norm{\rho}_\beta + \norm{\rho^{1/C}\bu}_C\right) < \infty.\]
Therefore by Lemma \ref{T31} $\sup_{0 \le t \le T} \norm{\rho}_\infty < \infty$.
\end{proof}

\begin{rem}
If $\gamma=3/2$, then we may take $A=5$, $B=8$, $C=15$, and $\beta = 16$ so that all the conditions of lemmas are fulfilled.
\end{rem}
		
%=========================================================
\section{Proof of Theorem \ref{G2}}
\label{S5}
%=========================================================

In this section, we assume that $(\rho, \bu,\theta)$ is the strong solution with the regularity stated in Theorem \ref{T2} and satisfies for some positive number $\delta$ the following integrability condition 
\begin{equation}
\label{E50}
\sup_{0 \le t \le T} \left(\norm{\rho}_{\delta} + \norm{\Delta \theta}_{2} + \norm{\divg \bu}_3\right) < \infty.
\end{equation}
The constant $\delta$ will be specified in each lemmas.

\begin{lem}
\label{T51}
If the condition \eqref{E50} holds for some $\delta \ge 3/2$, then
\[\sup_{0 \le t \le T} \intO \rho(|\bu|^2 + |\theta|^2) 
+ \int_0^T \intO (|\nabla \bu|^2 +|\nabla \theta|^2) < \infty.\]
\end{lem}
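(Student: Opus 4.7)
The plan is to first use the hypothesis \eqref{E50} to establish $\norm{\theta}_\infty \lesssim 1$, after which the standard $L^2$-based energy estimates for $\bu$ and $\theta$ close routinely. This preliminary $L^\infty$ bound is the main obstacle, since \eqref{E50} controls $\theta$ only modulo additive constants. Since $\intO \Delta\theta = 0$ on $\T^3$, elliptic regularity and the Poincar\'e inequality give $\norm{\theta - \bar\theta}_{H^2} \lesssim \norm{\Delta\theta}_2$ (with $\bar\theta = \inn\theta$), and $H^2(\T^3) \hookrightarrow L^\infty$ then yields $\norm{\theta - \bar\theta}_\infty \lesssim 1$. To pin down the mean $\bar\theta$ I would combine total energy conservation $\frac{d}{dt}\intO(\tfrac12\rho|\bu|^2 + \rho\theta) = 0$ (obtained by summing the $\bu\cdot$-momentum and $\theta$-energy identities), mass conservation $\intO\rho = \intO\rho_0$, and the nonnegativity $\theta \ge 0$ from the parabolic maximum principle; together these force $\bar\theta\intO\rho_0 \le E_0 + \norm{\rho_0}_1 \norm{\theta - \bar\theta}_\infty$, so that $\bar\theta$, and therefore $\norm{\theta}_\infty$, is uniformly bounded.

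With $\norm{\theta}_\infty \lesssim 1$ secured, I would multiply the momentum equation by $\bu$ and use continuity to obtain
\[\tfrac12 \tfrac{d}{dt}\intO\rho|\bu|^2 + \mu\intO|\nabla\bu|^2 + (\lambda+\mu)\intO|\divg\bu|^2 = \intO\rho\theta\,\divg\bu.\]
The left-hand dissipation is coercive on $\norm{\nabla\bu}_2^2$, with a positive constant depending only on $\mu$ and $\lambda$, by combining $(\divg\bu)^2 \le 3|\nabla\bu|^2$ with the physical constraints $\mu > 0$ and $3\lambda + 2\mu > 0$. The right-hand side is controlled by H\"older's inequality with exponents $(3/2,\infty,3)$: $|\intO\rho\theta\,\divg\bu| \le \norm{\rho}_{3/2}\norm{\theta}_\infty\norm{\divg\bu}_3 \lesssim 1$, using $\delta \ge 3/2$. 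Integrating in time yields $\sup_t \intO\rho|\bu|^2 + \int_0^T\intO|\nabla\bu|^2 \lesssim 1$.

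Similarly, multiplying the energy equation by $\theta$ and using continuity gives
\[\tfrac12 \tfrac{d}{dt}\intO\rho\theta^2 + \intO|\nabla\theta|^2 = -\intO\rho\theta^2\,\divg\bu + \intO\theta\, Q,\]
where $Q = \tfrac{\mu}{2}|\nabla\bu + \nabla\bu^t|^2 + \lambda|\divg\bu|^2$ satisfies $|Q| \lesssim |\nabla\bu|^2$. The first right-hand term is bounded by $\norm{\rho}_{3/2}\norm{\theta}_\infty^2\norm{\divg\bu}_3 \lesssim 1$, and the second by $\norm{\theta}_\infty\intO|\nabla\bu|^2$, which is integrable in time by the velocity estimate just obtained. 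Integrating in time then produces the asserted bounds on $\sup_t\intO\rho|\theta|^2$ and $\int_0^T\intO|\nabla\theta|^2$, completing the proof.
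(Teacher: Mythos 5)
Your proposal is correct and, for the energy estimates themselves, follows the same route as the paper: test the momentum equation with $\bu$ and the temperature equation with $\theta$, control the pressure and dissipation terms using $\|\rho\|_{3/2}$, $\|\divg\bu\|_3$ and $\|\theta\|_\infty$, and integrate in time. The genuine value you add is the preliminary justification of $\|\theta\|_\infty \lesssim 1$ (via elliptic regularity for $\theta - \inn{\theta}$ from the assumed bound on $\|\Delta\theta\|_2$, together with total energy conservation, mass conservation and $\theta \ge 0$ to pin down the mean), since the paper invokes $\|\theta\|_\infty$ without comment; this step is indeed required for the proof to close, and your argument for it is sound, modulo the standard standing assumptions that $\intO \rho_0 > 0$ and $\theta_0 \ge 0$ so that the maximum principle applies.
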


\begin{proof}
Testing with $\bu$ in the equation \eqref{E12} yields
\[\frac{1}{2} \frac{d}{dt} \intO \rho |\bu|^2 + \mu \intO |\nabla \bu|^2 + (\mu+\lambda) \intO |\divg \bu|^2 = \intO p \divg \bu.\]
By H\"older's inequality 
\[\intO p \divg \bu
\le \norm{\rho}_{3/2} \norm{\theta}_\infty \norm{\divg \bu}_3 < \infty.\]
Similarly, testing with $\theta$ in the equation \eqref{E16} yields
\begin{align*}
&\frac{1}{2} \frac{d}{dt} \intO \rho |\theta|^2 +  \intO |\nabla \theta|^2 \\
&= - \intO \rho \theta^2 \divg \bu 
+ \frac{\mu}{2} \intO |\nabla \bu +\nabla \bu^{t}|^2 \theta 
+ \lambda \intO |\divg \bu|^2 \theta.
\end{align*}
The right hand side is bounded by 
\[\norm{\rho}_{3/2} \norm{\theta}_\infty^2 \norm{\divg \bu}_3 + \norm{\theta}_\infty \norm{\nabla \bu}_2^2.\]
Integrating in time yields the result.
\end{proof}

\begin{lem}
\label{T52}
Let $2 < C < \infty$.
If the condition \eqref{E50} holds for some $\delta$ satisfying $(C+2)/2 \le \delta$, then 
\[\sup_{0 \le t \le T} \norm{\rho^{1/C}\bu}_C < \infty.\]
\end{lem}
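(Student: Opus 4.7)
The plan is to mimic the proof of Lemma \ref{T42}, replacing the barotropic pressure $p=\rho^\gamma$ by $p=\rho\theta$ and using the extra control of $\Delta\theta$ in \eqref{E50} to handle the temperature factor.

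First I would invoke Lemma \ref{T41}, which depends only on the momentum equation $\rho\tu = L\bu-\nabla p$ and not on the specific constitutive law. Therefore, in the heat-conducting setting one still has
\[
\frac{d}{dt}\intO \rho|\bu|^C + C\mu \intO |\bu|^{C-2}|\nabla\bu|^2 \lesssim \left(\intO|\divg\bu|^3\right)^{C/3} + \intO p^2|\bu|^{C-2}.
\]
By condition \eqref{E50}, the term with $\divg\bu$ is bounded uniformly in $t$, so it only remains to estimate $\intO p^2|\bu|^{C-2} = \intO \rho^2\theta^2|\bu|^{C-2}$.

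Next I would control $\theta$ in $L^\infty$ using the hypothesis $\norm{\Delta\theta}_2\lesssim 1$ together with the $L^2$-type bound for $\theta$ provided by Lemma \ref{T51} (and the conservation of mass $\int\rho=\int\rho_0$, which, with the periodic geometry and the basic energy estimate for \eqref{E16}, gives uniform control of $\fint\theta$). Elliptic regularity on $\T^3$ then yields $\norm{\theta}_{H^2}\lesssim 1$, and Sobolev embedding $H^2(\T^3)\hookrightarrow L^\infty(\T^3)$ gives $\sup_{0\le t\le T}\norm{\theta}_\infty<\infty$. With this in hand,
\[
\intO \rho^2\theta^2 |\bu|^{C-2} \le \norm{\theta}_\infty^2 \intO \rho^2|\bu|^{C-2}.
\]

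Now I would apply Hölder's inequality with exponents $C/2$ and $C/(C-2)$ to the factorization $\rho^2|\bu|^{C-2} = \rho^{(C+2)/C}\,(\rho|\bu|^C)^{(C-2)/C}$, obtaining
\[
\intO \rho^2|\bu|^{C-2} \le \left(\intO \rho^{(C+2)/2}\right)^{2/C}\left(\intO \rho|\bu|^C\right)^{(C-2)/C}.
\]
The hypothesis $(C+2)/2\le\delta$ and the conservation of mass let me interpolate to bound $\norm{\rho}_{(C+2)/2}\lesssim 1$. Hence, by Young's inequality,
\[
\frac{d}{dt}\intO \rho|\bu|^C \lesssim 1 + \intO \rho|\bu|^C,
\]
and Gronwall's lemma applied on $[0,T]$ closes the estimate.

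The main obstacle I anticipate is step two, namely upgrading the hypothesis $\norm{\Delta\theta}_2\lesssim 1$ to the pointwise bound $\norm{\theta}_\infty\lesssim 1$ that the above Hölder chain requires. This needs a control on a low-order norm of $\theta$ (e.g.\ $\norm{\theta}_2$ or the average $\fint\theta$) in order to complement $\norm{\Delta\theta}_2$ and then invoke the three-dimensional Sobolev embedding $H^2\hookrightarrow L^\infty$; everything else is a direct adaptation of Lemma \ref{T42}.
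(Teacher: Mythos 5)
Your proof follows the paper's argument step for step: invoke Lemma~\ref{T41} (which is independent of the constitutive law), write $\int_\Omega p^2|\bu|^{C-2}=\int_\Omega\rho^2\theta^2|\bu|^{C-2}$, pull out $\norm{\theta}_\infty^2$, apply H\"older with exponents $C/2$ and $C/(C-2)$ to the factorization $\rho^2|\bu|^{C-2}=\rho^{(C+2)/C}(\rho|\bu|^C)^{(C-2)/C}$, then Young and Gronwall. That is exactly the paper's proof.

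The one point where you go beyond the paper is in flagging, and proposing to justify, the uniform bound $\sup_{0\le t\le T}\norm{\theta}_\infty<\infty$. Your instinct is right that this needs justification — condition~\eqref{E50} only gives $\norm{\Delta\theta}_2\lesssim 1$, and the paper uses $\norm{\theta}_\infty$ freely throughout Section~\ref{S5} without comment. However, be careful about your first suggested route: you cannot obtain the low-order control from Lemma~\ref{T51}, since the paper's proof of \emph{that} lemma already invokes $\norm{\theta}_\infty$ (in bounding both $\int_\Omega p\divg\bu$ and the right-hand side of the $\theta$-equation), so that chain of reasoning is circular. The second route you mention is the sound one and should be made the main argument: the total energy balance bounds $\int_\Omega\rho\theta$, conservation of mass gives $\int_\Omega\rho=\int_\Omega\rho_0>0$, and the identity $\bigl(\fint_\Omega\theta\bigr)\int_\Omega\rho=\int_\Omega\rho\theta-\int_\Omega\rho(\theta-\fint_\Omega\theta)$ together with $\norm{\theta-\fint_\Omega\theta}_{L^6}\lesssim\norm{\nabla\theta}_2\lesssim\norm{\Delta\theta}_2$ (on $\T^3$) controls $\fint_\Omega\theta$; then elliptic regularity and $H^2(\T^3)\hookrightarrow L^\infty$ give $\norm{\theta-\fint_\Omega\theta}_\infty\lesssim\norm{\Delta\theta}_2$, closing the estimate.
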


\begin{proof}
From Lemma \ref{T41} 
\[\frac{d}{dt} \intO \rho |\bu|^C 
\lesssim 1 + \intO p^2 |\bu|^{C-2}.\]
By H\"older's inequality 
\[\intO p^2 |\bu|^{C-2} 
= \intO \rho^2 \theta^2 |\bu|^{C-2} 
\le \norm{\theta}_\infty^2 \left(\intO \rho^{(C+2)/2}\right)^{2/C} 
\left(\intO \rho |\bu|^C\right)^{(C-2)/C}.\]
By Young's inequality 
\[\frac{d}{dt} \intO \rho |\bu|^C \lesssim 1 + \intO \rho |\bu|^C.\]
The result follows by the Gronwall lemma.
\end{proof}

\begin{lem}
\label{T53}
Let $10 \le C < \infty$.
If the condition \eqref{E50} holds for some $\delta$ satisfying $(C+2)/2 \le \delta$, then 
\[\int_0^T \intO \rho |\tu|^2 + \sup_{0 \le t \le T} \intO |\nabla \bu|^2 < \infty.\]
\end{lem}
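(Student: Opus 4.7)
The plan is to mirror the five-step proof of Lemma \ref{T43}, testing the momentum equation \eqref{E12} with $\partial_t \bu$ and using the decomposition $(\lambda+2\mu)\divg \bu = G + p$ to produce the basic identity
\[\intO \rho|\tu|^2 + \frac{1}{2}\frac{d}{dt}\intO\left(\mu|\nabla\bu|^2 + (\lambda+\mu)|\divg\bu|^2 - 2p\divg\bu + \frac{p^2}{\lambda+2\mu}\right) = I_1 + I_2 + I_3,\]
with $I_1, I_2, I_3$ defined exactly as in \eqref{E42}, but now with the constitutive law $p=\rho\theta$. The terms $I_2$ and $I_3$ go through as before, using the Sobolev-type bound $\norm{\nabla G}_q \lesssim \norm{\rho\tu}_q$ coming from $\divg(\rho\tu)=\Delta G$, the uniform bound $\norm{\theta}_\infty \lesssim 1$ extracted from the assumption $\norm{\Delta\theta}_2 < \infty$ together with the $H^2 \hookrightarrow L^\infty$ embedding, Lemma \ref{T52}, and the $L^\delta$-bound on $\rho$.

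The new feature lies in $I_1$. In the barotropic case the clean relation $\p_t p + \divg(p\bu) = -(\gamma-1) p \divg \bu$ was the workhorse. Here the temperature equation \eqref{E16} yields instead
\[\p_t p + \divg(p\bu) = -p\divg\bu + \Delta\theta + \frac{\mu}{2}\left|\nabla\bu + \nabla\bu^t\right|^2 + \lambda(\divg\bu)^2,\]
so $I_1$ splits into four contributions. The piece involving $p\divg\bu\cdot G$ is handled exactly as in Step 2 of Lemma \ref{T43}. The piece involving $\Delta\theta\cdot G$ is controlled by $\norm{\Delta\theta}_2\,\norm{G-\inn{G}}_2 + |\inn{\Delta\theta}|\,|\inn{G}|$; here one exploits $\inn{G} = -\inn{p}$, the periodic Poincar\'e inequality $\norm{G - \inn{G}}_2 \lesssim \norm{\nabla G}_{6/5} \lesssim \norm{\rho\tu}_{6/5}\lesssim \norm{\rho^{1/2}\tu}_2$, and the $L^\delta$ bound on $\rho$.

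The remaining pieces are of the form $\intO |\nabla\bu|^2 G$, which I estimate by $\norm{\nabla\bu}_4^2 \norm{G}_2$. Here $\norm{G}_2$ is bounded directly from $\norm{p}_2 + \norm{\divg \bu}_2$, and $\norm{\nabla\bu}_4$ is controlled by the Helmholz--Leray splitting and interpolation as in Step 4 of Lemma \ref{T43}, namely $\norm{\nabla\bu}_4 \lesssim \norm{\rho\tu}_{12/7} + \norm{p}_4 \lesssim \norm{\rho^{1/2}\tu}_2 + 1$. Closing this interpolation requires $\norm{\rho}_5$ and $\norm{p}_4 \lesssim \norm{\theta}_\infty \norm{\rho}_4$ to be finite; this is precisely where the restriction $C \ge 10$ (so that $\delta \ge (C+2)/2 \ge 6$) is used. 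Absorbing a small multiple of $\norm{\rho^{1/2}\tu}_2^2$ on the left, integrating in time, and invoking the a priori bound from Lemma \ref{T51} and the initial-data bound for $\intO |p\,\divg\bu|$ yields the claim.

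The main obstacle is the cubic-looking term $\intO|\nabla\bu|^2 G$, which is genuinely new compared with the barotropic proof: one must distribute the derivatives carefully enough so that the resulting estimate is only \emph{linear} in $\norm{\rho^{1/2}\tu}_2$ after the $\norm{\nabla\bu}_4$ interpolation, otherwise it cannot be absorbed into $\intO \rho|\tu|^2$ on the left-hand side. The requirement $\delta \ge (C+2)/2$ is calibrated to make this absorption work.
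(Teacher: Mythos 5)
Your overall strategy matches the paper's: repeat Lemma~\ref{T43}, replace the barotropic pressure identity by the rewriting of the temperature equation~\eqref{E16}, and absorb the resulting terms. The place where your plan breaks down is the new contribution to $I_1$ of the form $\intO |\nabla\bu|^2\,G$.

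You split this as $\norm{\nabla\bu}_4^2\,\norm{G}_2$ and then bound $\norm{\nabla\bu}_4\lesssim\norm{\rho\tu}_{12/7}+\norm{p}_4\lesssim\norm{\rho^{1/2}\tu}_2+1$. The embedding $W^{1,12/7}(\T^3)\hookrightarrow L^4$ is the critical Sobolev exponent, so it yields no $\vep$-gain: the constant in this estimate is generic. Consequently $\norm{\nabla\bu}_4^2\,\norm{G}_2\lesssim\bigl(\norm{\rho^{1/2}\tu}_2+1\bigr)^2\norm{G}_2$ is quadratic in $\norm{\rho^{1/2}\tu}_2$ with an uncontrolled coefficient (and $\norm{G}_2$ is of order one, not small), so it cannot be absorbed into $\intO\rho|\tu|^2$ on the left. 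You flag this obstacle yourself, but the plan you lay out does not actually remove it; it only restates the difficulty.

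The paper's proof uses the more economical H\"older split $\intO|\nabla\bu|^2G\le\norm{\nabla\bu}_3^2\,\norm{G}_3$, estimating all of $I_1$ by
\[
I_1\lesssim\bigl(\norm{p}_3\norm{\divg\bu}_3+\norm{\Delta\theta}_2+\norm{\nabla\bu}_3^2\bigr)\bigl(\norm{\divg\bu}_3+\norm{p}_3\bigr)\lesssim 1+\norm{\nabla\bu}_3^2,
\]
where $\norm{G}_3\lesssim\norm{\divg\bu}_3+\norm{p}_3$ is bounded directly from~\eqref{E50}. The decisive point is that $L^3$ is \emph{subcritical} for the available elliptic regularity: taking the elliptic estimate at exponent $8/5<12/7$ (which only costs $\rho\in L^4$, comfortably inside $\delta\ge 6$), the embedding $W^{1,8/5}\hookrightarrow L^{24/7}$ has $24/7>3$, and interpolating between $L^{24/7}$ and a lower norm produces $\norm{\nabla\bu}_3\lesssim\vep\,\norm{\rho^{1/2}\tu}_2+\alpha_\vep$. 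After squaring, the quadratic term carries the small factor $\vep^2$ and can be absorbed, yielding the differential inequality
\[
\intO\rho|\tu|^2+\frac{d}{dt}\intO\Bigl(\mu|\nabla\bu|^2+(\lambda+\mu)|\divg\bu|^2-2p\divg\bu+\tfrac{1}{\lambda+2\mu}p^2\Bigr)\le\tfrac12\intO\rho|\tu|^2+\alpha.
\]
So the requirement is not, as you suggest, that the bound be \emph{linear} in $\norm{\rho^{1/2}\tu}_2$; a quadratic bound is fine provided its coefficient can be made arbitrarily small, and choosing $L^3$ rather than $L^4$ for $\nabla\bu$ is exactly what buys that smallness. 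Replacing your $L^4$--$L^4$--$L^2$ split by the $L^3$--$L^3$--$L^3$ split and invoking the $\vep$-interpolation closes the gap; the remaining steps of your outline (treatment of $I_2$, $I_3$, the $\Delta\theta\cdot G$ piece, and the final time-integration) are sound.
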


\begin{proof}
We divide the proof into a few steps.
\begin{enumerate}[\bf{Step} 1)]
\item
Recall the following identity, which was derived in the first step proving Lemma \ref{T43},
\begin{equation}
\label{E51}
\begin{split}
&\intO \rho |\tu|^2 + \frac{1}{2} \frac{d}{dt} \intO \left(\mu |\nabla \bu|^2 +(\lambda+\mu) |\divg \bu|^2 - 2p\divg \bu + \frac{1}{\lambda+2\mu} p^2\right) \\
&= - \frac{1}{\lambda+2\mu} \intO (\p_t p + \divg (p\bu)) G 
- \frac{1}{\lambda+2\mu} \intO p\bu \cdot \nabla G 
+ \intO \rho \tu \cdot (\bu \cdot \nabla) \bu \\
&=: I_1 + I_2 + I_3.
\end{split}
\end{equation}
\item
The equation \eqref{E16} can be rewritten as  
\[\p_t p + \divg(p\bu) = - p \divg \bu +  \Delta \theta + \frac{\mu}{2} |\nabla \bu +\nabla \bu^{t}|^2 + \lambda |\divg \bu|^2.\]
Since $\norm{p}_3 \le \norm{\rho}_3 \norm{\theta}_\infty < \infty$, the condition \eqref{E50} yields
\begin{align*}
I_1 
&\lesssim \intO (|p \divg \bu| +  |\Delta \theta| + |\nabla \bu|^2) |G| \\
&\lesssim \left(\norm{p \divg \bu}_{3/2} + \norm{\Delta \theta}_{3/2} + \norm{|\nabla \bu|^2}_{3/2}\right) \norm{G}_3 \\
&\lesssim \left(\norm{p}_3 \norm{\divg \bu}_3 + \norm{\Delta \theta}_2 + \norm{\nabla \bu}_3^2\right) \left(\norm{\divg \bu}_3+\norm{p}_3\right) \\
&\lesssim 1 + \norm{\nabla \bu}_3^2. 
\end{align*}
Using Lemma \ref{T52} and the relation $\divg(\rho \tu) = \Delta G$, we obtain 
\begin{align*}
I_2 
&\lesssim \intO |p\bu||\nabla G| \\
&\le \norm{\theta}_\infty \norm{\rho^{1/C}\bu}_C \norm{\rho^{(C-1)/C}}_{(3C^2-2C)/(C^2-3C+2)} \norm{\nabla G}_{(3C-2)/(2C-2)} \\
&\lesssim \norm{\rho}_{(3C-2)/(C-2)}^{(C-1)/C} \norm{\rho \tu}_{(3C-2)/(2C-2)}.
\end{align*}
An elementary calculation shows that for $10 \le C < \infty$
\[(3C-2)/(C-2) \le (C+2)/2 \le \delta\] 
and hence by H\"older's inequality 
\[I_2 \lesssim \norm{\rho \tu}_{(3C-2)/(2C-2)} 
\lesssim \norm{\rho^{1/2}}_{(6C-4)/(C-2)} \norm{\rho^{1/2} \tu}_2 
\lesssim \norm{\rho^{1/2} \tu}_2.\]
An elementary calculation shows also that for $10 \le C < \infty$
\[(3C-6)/(C-6) \le (C+2)/2 \le \delta\] 
and hence by H\"older's inequality 
\begin{align*}
I_3
&\lesssim \norm{\rho^{1/2-1/C}}_{6C/(C-6)} \norm{\rho^{1/C}\bu}_C \norm{\rho^{1/2} \tu}_2 \norm{\nabla \bu}_3 \\
&\lesssim \norm{\rho}_{(3C-6)/(C-6)}^{(C-2)/(2C)} \norm{\rho^{1/2} \tu}_2 \norm{\nabla \bu}_3 \\
&\lesssim \norm{\rho^{1/2} \tu}_2 \norm{\nabla \bu}_3.
\end{align*}
\item
Combining \eqref{E51} and the estimates for $I_1$, $I_2$, and $I_3$, we obtain that 
\begin{equation}
\label{E52}
\begin{split}
&\intO \rho |\tu|^2 + \frac{1}{2} \frac{d}{dt} \intO \left(\mu |\nabla \bu|^2 +(\lambda+\mu) |\divg \bu|^2 - 2p\divg \bu + \frac{1}{\lambda+2\mu} p^2\right) \\
&\lesssim 1 + \norm{\nabla \bu}_{3}^2 
+ \norm{\rho^{1/2} \tu}_2 + \norm{\rho^{1/2} \tu}_2 \norm{\nabla \bu}_3.
\end{split}
\end{equation}
Using the Helmholz--Leray decomposition of $\bu$ and an interpolation we have 
\[\norm{\nabla \bu}_3
\lesssim \norm{\divg \bu}_3 + \norm{\nabla\cP \bu}_3 
\lesssim \vep \norm{\nabla^2 \cP \bu}_{8/5} + \alpha_\vep\]
for some positive constant $\alpha_\vep$.
Using the relation $\cP (\rho \tu ) = \mu \Delta \cP \bu$ in \eqref{E21} we have 
\[\norm{\nabla^2 \cP \bu}_{8/5} 
\lesssim \norm{\rho \tu}_{8/5} 
\le \norm{\rho^{1/2}}_8 \norm{\rho^{1/2}\tu}_2
\lesssim \norm{\rho^{1/2}\tu}_2.\]
Hence 
\[\norm{\nabla \bu}_3 \lesssim \vep \norm{\rho^{1/2}\tu}_2 + \alpha_\vep.\]
We fix a small number $\vep$ so that, by Young's inequality, the estimate \eqref{E52} becomes 
\begin{align*}
&\intO \rho |\tu|^2 + \frac{d}{dt} \intO \left(\mu |\nabla \bu|^2 +(\lambda+\mu) |\divg \bu|^2 - 2p\divg \bu + \frac{1}{\lambda+2\mu} p^2\right) \\
&\le \frac{1}{2} \intO \rho |\tu|^2 + \alpha
\end{align*}
for some constant $\alpha$.
Since $\intO |p \divg \bu| \le \norm{p}_{3/2} \norm{\divg \bu}_3 < \infty$, we conclude the result by integrating in time over $[0,t]$ and then by taking supremum for $t \in [0,T]$.
\end{enumerate}
This completes the proof of Lemma \ref{T53}.
\end{proof}

\begin{lem}
\label{T54}
Let $10 \le C < \infty$.
If the condition \eqref{E50} holds for some $\delta$ satisfying $(C+2)/2 \le \delta$, then 
\[\sup_{0 \le t \le T} \intO \rho |\tu|^2 + \int_0^T \intO |\nabla \tu|^2 < \infty.\]
\end{lem}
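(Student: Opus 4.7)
The plan is to follow the structure of Lemma \ref{T44}, since the only genuinely new ingredient is the replacement of the barotropic pressure identity $\p_t p + \divg(p\bu) = -(\gamma-1) p\divg\bu$ by its heat-conducting analogue. First I would apply the material derivative to the momentum equation, test with $\tu$, and integrate by parts exactly as in Steps 1--2 of Lemma \ref{T44}. This produces the identity \eqref{E43} together with the viscous-term estimate \eqref{E44}, giving for every $\vep>0$
\[\frac{1}{2}\frac{d}{dt} \intO \rho |\tu|^2 + \mu \intO |\nabla \tu|^2 + (\lambda+\mu)\intO|\divg\tu|^2 \ll \vep \intO |\nabla \tu|^2 + \alpha_\vep \intO |\nabla \bu|^4 + \intO \p_t p \divg \tu + (\bu\cdot\nabla)\tu \cdot \nabla p.\]
After integrating by parts in the last two terms, I obtain $\intO (\p_t p + \divg(p\bu))\divg\tu - \intO p(\nabla\bu)^T : \nabla\tu$, exactly as in Step 3 of Lemma \ref{T44}.

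Next I would use the heat equation \eqref{E16} to rewrite
\[\p_t p + \divg(p\bu) = -p\divg\bu + \Delta\theta + \frac{\mu}{2}|\nabla\bu+\nabla\bu^{t}|^2 + \lambda|\divg\bu|^2,\]
which was already exploited in Step 2 of Lemma \ref{T53}. Pairing with $\divg\tu$ via H\"older's inequality gives
\[\norm{\p_t p + \divg(p\bu)}_2 \lesssim \norm{p}_6 \norm{\divg\bu}_3 + \norm{\Delta\theta}_2 + \norm{\nabla\bu}_4^2 \lesssim 1 + \norm{\nabla\bu}_4^2,\]
where the finiteness of $\norm{p}_6$ follows from $\norm{\theta}_\infty<\infty$ (provided by $H^2$ regularity of $\theta$ together with \eqref{E50}) and $(C+2)/2 \le \delta$ with $C\ge 10$, which implies $\delta \ge 6$. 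The remaining term $\intO p(\nabla\bu)^T:\nabla\tu$ is bounded by $\norm{p}_4 \norm{\nabla\bu}_4 \norm{\nabla\tu}_2$, and $\norm{p}_4<\infty$ under the same condition. Young's inequality then absorbs the $\norm{\nabla\tu}_2$ factors into $\vep\norm{\nabla\tu}_2^2$, leaving a dominant contribution of the form $\alpha_\vep(1+\norm{\nabla\bu}_4^4)$. As in Step 4 of Lemma \ref{T44}, the Helmholtz--Leray decomposition combined with $\cP(\rho\tu)=\mu\Delta\cP\bu$ and $\divg(\rho\tu)=\Delta G$ yields $\norm{\nabla\bu}_4 \lesssim \norm{\rho^{1/2}\tu}_2 + 1$, using $\norm{\rho^{1/2}}_{12}<\infty$ which again follows from $(C+2)/2 \le \delta$. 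Setting $Y(t) = \intO\rho|\tu|^2(t)$ and $Z(t) = \intO|\nabla\tu|^2(t)$, we arrive at the same differential inequality \eqref{E46}, namely $\dot Y + Z \le \alpha Y^2 + \alpha$, and the Gronwall argument of Step 5 of Lemma \ref{T44} concludes using $\int_0^T Y\,dt<\infty$ from Lemma \ref{T53}.

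The main obstacle is the quadratic gradient term $|\nabla\bu+\nabla\bu^{t}|^2$ which appears in the pressure evolution and has no counterpart in the barotropic setting. A crude bound through the $L^2$ norm of $\p_t p + \divg(p\bu)$ introduces $\norm{\nabla\bu}_4^2$, and after the Helmholtz-based control $\norm{\nabla\bu}_4 \lesssim \norm{\rho^{1/2}\tu}_2 + 1$ this produces a term of order $Y^2$ on the right-hand side of the ODE, precisely the order already present in Lemma \ref{T44}. The delicate point is therefore not the structure of the Gronwall argument but the careful bookkeeping of Lebesgue exponents in the H\"older chains, ensuring that every density norm encountered is dominated by $\norm{\rho}_\delta$ under the single hypothesis $(C+2)/2 \le \delta$ with $C \ge 10$, and that the $L^\infty$ bound on $\theta$ is available to convert estimates on $\rho$ into estimates on $p$.
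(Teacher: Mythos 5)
Your proposal is correct and follows essentially the same route as the paper: apply the material derivative to the momentum equation, test with $\tu$, estimate the viscous terms as in Lemma \ref{T44}, replace the barotropic pressure identity by the rewriting of \eqref{E16}, bound the resulting pressure terms so that the right-hand side reduces to $\intO|\nabla\bu|^4+1$, then close via the Helmholtz--Leray decomposition and the Riccati/Gronwall step using $\int_0^T Y\,dt<\infty$ from Lemma \ref{T53}. The paper leaves the last two steps implicit by reference to Lemma \ref{T44}, whereas you spell them out; your minor variant of bounding $\norm{p\divg\bu}_2$ by $\norm{p}_6\norm{\divg\bu}_3$ instead of $\norm{p}_4\norm{\nabla\bu}_4$ is equally valid under $\delta\ge(C+2)/2\ge 6$.
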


\begin{proof}
Since the proof is essentially the same as the proof of Lemma \ref{T44}, we only describe the different parts.
Taking the material derivative to the momentum equation we obtain 
\begin{equation}
\label{E53}
\frac{1}{2} \frac{d}{dt} \intO \rho |\tu |^2 
- \intO [L(\partial_t \bu) + \divg (L\bu \otimes \bu)] \cdot \tu 
= \intO \partial_t p \divg \tu  + (\bu \cdot \nabla) \tu  \cdot \nabla p.
\end{equation}
Integrating by parts yields
\begin{equation}
\label{E54}
\begin{split}
&\mu \intO |\nabla \tu|^2 
+ (\lambda+\mu) \intO |\divg \tu|^2  
+ \intO [L(\partial_t \bu) + \divg (L\bu \otimes \bu)] \cdot \tu \\
&\ll \intO |\nabla \bu|^2 |\nabla \tu|
\end{split}
\end{equation}
and  
\begin{align*}
&\intO \partial_t p \divg \tu + (\bu \cdot \nabla) \tu  \cdot \nabla p \\
&= \intO \partial_t p \divg \tu - p \bu \cdot \nabla \divg \tu - p (\nabla \bu)^T : \nabla \tu \\
&= \intO (\partial_t p + \divg (p \bu)) \divg \tu - p (\nabla \bu)^T : \nabla \tu \\
&\le (\norm{\partial_t p + \divg (p \bu)}_2 + \norm{p \nabla \bu}_2) \norm{\nabla \tu}_2
\end{align*}
The equation \eqref{E16} can be rewritten as  
\[\partial_t p + \divg (p \bu) 
= - p \divg \bu +  \Delta \theta + \frac{\mu}{2} |\nabla \bu +\nabla \bu^{t}|^2 + \lambda |\divg \bu|^2.\]
Since $\norm{p}_4 \le \norm{\rho}_4 \norm{\theta}_\infty < \infty$, we have 
\begin{align*}
&\norm{\partial_t p + \divg (p \bu)}_2 + \norm{p \nabla \bu}_2 \\
&\ll \norm{p}_4 \norm{\nabla \bu}_4 + \norm{\Delta \theta}_2 + \norm{|\nabla \bu|^2}_2 \\
&\ll 1 + \norm{\nabla \bu}_4^2.
\end{align*}
By the Cauchy inequality we obtain that for $\vep > 0$
\begin{equation}
\label{E55}
\intO \partial_t p \divg \tu + (\bu \cdot \nabla) \tu  \cdot \nabla p 
\ll \vep \intO |\nabla \tu|^2 + \alpha_\vep \left(1+\intO |\nabla \bu|^4\right)
\end{equation}
for some positive number $\alpha_\vep$.
Combining \eqref{E53}, \eqref{E54}, and \eqref{E55} with a fixed small number $\vep$, we get
\[\frac{d}{dt} \intO \rho |\tu|^2  + \intO |\nabla \tu|^2 
\ll \intO |\nabla \bu|^4 + 1.\]
Now, the result follows by the Gronwall lemma.
\end{proof}

\begin{lem}
\label{T55}
Let $3< A < 6$.
Suppose 
\[\sup_{0 \le t \le T} \norm{\rho^{1/2}\tu}_2 < \infty\]
and $\sup_{0 \le t \le T} \norm{\rho}_\beta < \infty$ for some $\beta$ satisfying 
\[\max\set{A, \frac{3A}{6-A}} \le \beta.\]
Then 
\[\sup_{0 \le t \le T} \norm{\nabla \bu}_A < \infty.\]
\end{lem}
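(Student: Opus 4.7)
The plan is to mirror the proof of Lemma \ref{T45} almost verbatim, the only novelty being how one treats the pressure in the heat-conducting setting. First I would decompose the velocity by the Helmholz--Leray projection as $\bu = \cP \bu + \nabla \Delta^{-1} \divg \bu$, and use the two key identities $\divg(\rho \tu) = \Delta G$ and $\cP(\rho \tu) = \mu \Delta \cP \bu$, together with the fact that on the torus $\intO \divg \bu = 0$ so $\inn{G} + \inn{p} = 0$. The Sobolev embedding $W^{1,3A/(A+3)} \hookrightarrow L^A$ (valid precisely because $3 < A < 6$), combined with elliptic estimates for $G$ and $\cP \bu$, then yields the pointwise-in-time bound $\norm{\nabla \bu}_A \lesssim \norm{\rho \tu}_{3A/(A+3)} + \norm{p}_A$.

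Next I would control $\norm{\rho \tu}_{3A/(A+3)}$ by splitting $\rho = \rho^{1/2} \cdot \rho^{1/2}$ and applying H\"older's inequality:
\[
\norm{\rho \tu}_{3A/(A+3)} \le \norm{\rho^{1/2}}_{6A/(6-A)} \norm{\rho^{1/2}\tu}_2 = \norm{\rho}_{3A/(6-A)}^{1/2} \norm{\rho^{1/2}\tu}_2,
\]
which is finite thanks to the hypothesis $3A/(6-A) \le \beta$ together with the two bounds assumed on $\norm{\rho}_\beta$ and $\norm{\rho^{1/2}\tu}_2$.

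The one real departure from Lemma \ref{T45} is the pressure estimate. Since $p = \rho \theta$ rather than $\rho^\gamma$, I would simply write $\norm{p}_A \le \norm{\theta}_\infty \norm{\rho}_A$, which is finite by the hypothesis $A \le \beta$ provided one has uniform control of $\norm{\theta}_\infty$. The latter follows from the standing assumption \eqref{E50} that $\norm{\Delta \theta}_2$ is uniformly bounded on $[0,T]$, combined with the Sobolev embedding $H^2(\T^3) \hookrightarrow L^\infty(\T^3)$ and the elementary control of the spatial mean of $\theta$ supplied by the basic energy identities of Lemma \ref{T51}. The only conceptual subtlety is this uniform $L^\infty$ bound on $\theta$; once that is in hand, combining the three estimates gives $\norm{\nabla \bu}_A \lesssim \norm{\rho^{1/2}\tu}_2 + 1$ uniformly on $[0,T]$, and taking the supremum finishes the proof.
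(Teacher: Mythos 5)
Your proposal mirrors the paper's proof of Lemma~\ref{T55} essentially verbatim: the Helmholz--Leray split of $\bu$, the identities $\divg(\rho\tu)=\Delta G$ and $\cP(\rho\tu)=\mu\Delta\cP\bu$, the Sobolev/elliptic estimates yielding $\norm{\nabla\bu}_A\lesssim\norm{\rho\tu}_{3A/(A+3)}+\norm{p}_A$, the H\"older step for $\norm{\rho\tu}_{3A/(A+3)}$, and the replacement $\norm{p}_A\le\norm{\theta}_\infty\norm{\rho}_A$ are exactly the paper's steps. The one place you expand on the paper is the justification of $\norm{\theta}_\infty<\infty$; note, however, that Lemma~\ref{T51} only controls $\intO\rho|\theta|^2$, which in the presence of vacuum does not by itself bound the spatial mean $\inn{\theta}$, so that sub-claim needs a different argument (e.g.\ the conserved total energy $\intO(\tfrac12\rho|\bu|^2+\rho\theta)$ together with a lower bound on $\intO\rho$) — though the paper itself leaves this point equally implicit.
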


\begin{proof}
Using the Helmholz--Leray decomposition of $\bu$ and the Sobolev-type estimate we obtain 
\begin{align*}
\norm{\nabla \bu}_A 
&\lesssim \norm{\divg \bu}_A + \norm{\nabla\cP \bu}_A \\
&\lesssim \norm{G-\inn{G}}_A + \norm{p-\inn{p}}_A + \norm{\nabla\cP \bu}_A \\
&\lesssim \norm{\nabla G}_{3A/(3+A)} + \norm{\nabla^2 \cP \bu}_{3A/(3+A)} + \norm{p}_A \\ 
&\lesssim \norm{\rho \tu}_{3A/(3+A)} + \norm{p}_A.
\end{align*}
We have $\norm{p}_A \le \norm{\rho}_A \norm{\theta}_\infty$ and 
by H\"older inequality 
\[\norm{\rho \tu}_{3A/(3+A)} \le \norm{\rho^{1/2}}_{6A/(6-A)} \norm{\rho^{1/2}\tu}_2 \le \norm{\rho}_{3A/(6-A)}^{1/2} \norm{\rho^{1/2}\tu}_2\]
and hence 
\[\norm{\nabla \bu}_A \lesssim \norm{\rho^{1/2}\tu}_2 + 1.\]
Integrating in time yields the result.
\end{proof}

Now, we are ready to prove Theorem \ref{G2}.

\begin{proof}[\textbf{Proof of Theorem \ref{G2}}] 
Let $3< A < 6 < B < C < \infty$ with $1/A + 1/B < 1/3$.
Suppose the condition \eqref{E50} holds for 
\[\max\set{\frac{3A}{6-A}, \frac{C+2}{2}, \frac{(C-1)B}{C-B}} \le \beta.\] 
Combining Lemma \ref{T52}, Lemma \ref{T53}, Lemma \ref{T54}, and Lemma \ref{T55} we obtain that    
\[\sup_{0 \le t \le T} \left(\norm{\nabla \bu}_A + \norm{\rho}_\beta + \norm{\rho^{1/C}\bu}_C\right) < \infty.\]
Therefore by Lemma \ref{T31} $\sup_{0 \le t \le T} \norm{\rho}_\infty < \infty$.
\end{proof}

\begin{rem}
We may take $A=24/5$, $B=41/5$, $C=22$, and $\beta = 13$ so that all the conditions of lemmas are fulfilled.
\end{rem}

\section*{Acknowledgment}

Hi Jun Choe has been supported by the National Research Foundation of Korea (NRF) grant funded by the Korea government(MSIP) (No. 2015R1A5A1009350).
Minsuk Yang has been supported by the National Research Foundation of Korea (NRF) grant funded by the Korea government(MSIP) (No. 2016R1C1B2015731).


\begin{thebibliography}{99}

\bibitem{CCK} Y. Cho, H. J. Choe, H. Kim, 
Unique solvability of initial boundary value problems compressible viscous fluids, 
J. Math. Pures Appl., 83 (2004), 243-275.

\bibitem{CK1} Y. Cho and H. Kim, 
On classical solutions of compressible Navier-Stokes
equations with nonnegative initial densities, 
Manus. Math., 120 (2006), 91-129.

\bibitem{CK2}H. J. Choe and H. Kim, 
Strong solutions of Navier-Stokes equations for isentropic
compressible fluids, 
J. Diff. Eqs., 190 (2003), 504-523.

\bibitem{CJ}H. Choe and B. Jin, 
Regularity of weak solutions of the compressible Navier-Stokes
equations,
J. Korean Math. Soc. 40 (2003), 1031-1050.

\bibitem{DA} R. Danchin, 
Global existence in critical spaces for compressible Navier-Stokes
equations, 
Invent. Math., 141 (2000), 579-614.

\bibitem{D} B. Desjardins, 
Regularity of weak solutions of the compressible isentropic Navier-Stokes equations, 
Comm. P.D.E., 22 (1997), 977-1008.

\bibitem{FNP}E. Feireisl, A. Novotn\'y and H. Petzeltov\'a, 
On the global existence of globally defined weak solutions to the Navir-Stokes equations of isentropic compressible fluids, 
J. Math. Fluid Mech., 3 (2001), 358-392.

\bibitem{H} D. Hoff, 
Global solutions of the Navier-Stokes equations for multidimensional compressible flow with discontinuous initial data, 
J. Diff. Eqs., 120 (1995), 215-254.

\bibitem{HLX} X. Hunag, J. Li and Z. Xin, 
A blow-up criterion for the compressible Navier-Stokes equations,
Commun. Math. Phys. 301 (2011), 23-35 

\bibitem{LLX} H. Li, J. Li, Z. Xin,
Vanishing of vacuum states and blow-up phenomena of the compressible Navier--Stokes equations,
Comm. Math. Phys., 281 (2008), 401-444

\bibitem{L} P. Lions, 
Mathematical Topics in Fluid Mechanics, V. 2, Compressible Models,
Clarendon Press, Oxford, 1998.

\bibitem{MN} A. Matsumura and T. Nishida, 
The initial boundary value problems for the equations of motion of compressible and heat -conductive fluids, 
Comm. Math. Phys., 89 (1983), 445-464.

\bibitem{S} V. A. Solonnikov, 
The solvability of initial-boundary value problem for the equations of motion of s viscous compressible fluid, 
J. Sov. Math., 14 (1980), 1120-1133.

\bibitem{SWZ} Y. Sun, C. Wang and Z. Zhang, 
A Beal-Kato-Majda criterion for the 3-D compressible Navier--Stokes equations, 
J. Math. Pures Appl. 95 (2011), 36-47

\bibitem{VK} V. A. Vaigant and A. V. Kazhikhov, 
On the existence of global solutions of two dimensional compressible Navier--Stokes equations of a compressible viscous fluid,
Sibirsk Mat. Zh., 36(1995) 1283-1316: translation in Siberian Math. J., 36(1995), 1108-1141.

\bibitem{WZ}
H. Wen and C. Zhu,
Blow-up criterions of strong solutions to 3D compressible Navier--Stokes equations with vacuum,
Adv. Math. 248 (2013), 534-572

\bibitem{X}
Z. Xin, Blowup of smooth solutions to the compressible Navier-Stokes equations with compact density,
Comm. Pure Appl. Math., 51 (1998), 2229-240.

\end{thebibliography}
\end{document}